\documentclass[a4paper,12pt]{article}
\usepackage{cmap}                        
\usepackage[cp1251]{inputenc}            
\usepackage[english]{babel}
\usepackage[left=2.5cm,right=2.5cm,top=1.8cm,bottom=1.8cm]{geometry} 
\usepackage[ruled,vlined]{algorithm2e}
\usepackage{amssymb}
\usepackage{amsmath, amsthm}
\theoremstyle{plain}
\newtheorem{thm}{Theorem}
\newtheorem{lem}{Lemma}

\newtheorem{cor}{Corollary}

\theoremstyle{definition}

\newtheorem{remark}{Remark}
\newtheorem{defn}{Definition}
\newtheorem{pr}{Problem}

%

\begin{document}

\begin{center}\Large
\textbf{$\sigma$-properties of finite groups in  polynomial time\footnote{This work is supported by BFFR (project $\Phi$23PH$\Phi$-237).}}\normalsize

\smallskip
Viachaslau I. Murashka

 \{mvimath@yandex.ru\}

Faculty of Mathematics and Technologies of Programming,
Francisk Skorina Gomel State University, Sovetskaya 104, Gomel,
246028, Belarus\end{center}

\begin{abstract}
  Let $H, K$ be  subgroups of the permutation group $G$ of degree $n$ with $K\trianglelefteq G$ and $\sigma$ be a partition of the set of all different prime divisors of $|G/K|$. We prove that in  polynomial time (in $n$) one can check $G/K$ for $\sigma$-nilpotency and $\sigma$-solubility; $H/K$ for $\sigma$-subnormality and $\sigma$-$p$-permutability in $G/K$. Moreover one can find the least partition $\sigma$ of $\pi(G/K)$ for which $G/K$ is $\sigma$-nilpotent. Also one can find the least partition $\sigma$ of $\pi(G/K)$ for which $H/K$ is $\sigma$-$p$-permutable in $G/K$.
 \end{abstract}
 \textbf{Keywords.} Finite group; permutation group computation;
 $\sigma$-nilpotent group; $\sigma$-subnormal subgroup; $\sigma$-permutable subgroup; polynomial time algorithm.

\section*{Introduction}

All groups considered are \textbf{finite}.  One of the main tools in the theory of groups is the Sylow theory which connects the structure of a group with the prime divisors of its order. Many important results about the structure of a group are given with the help of Sylow subgroups. In the last decade the $\sigma$-method obtained a great development (for example, see \cite{ballester2020sigma,  BallesterBolinches2022, Ferrara2023, Guo2020,  math8122165, murashka2018generalization,  skiba2015sigma} and other). Its main idea is to study the structure of a group according to some partition $\sigma$ of the set of its prime divisors. So the Sylow theory and its applications are the particular case of this method when each element of $\sigma$ consists of one prime number. Another interesting case of this method is the Chunikhin's $\pi$-method \cite{CHUNIKHIN1969} which is the study of the structure of a group according to some set of primes $\pi$ (and its complement $\pi'$). In this case $\sigma$ consists of two elements.
The main advantage of $\sigma$-method is that we can chose $\sigma$ according to our task. Lets give the formal definition of $\sigma$-property

\begin{defn}
Let $\sigma$ be a partition of $\pi(G)$.
  By $\sigma$-property we will understand the property $\theta=\theta(\sigma)$ of a subgroup $H$ in a group $G$ such that
  \begin{enumerate}
    \item If $H$ has properties $\theta(\sigma^1)$ and $\theta(\sigma^2)$ in a group $G$, then it has  property $\theta(\sigma^1\cap\sigma^2)$ in a group $G$.

    \item If $|\sigma|=1$, then $H$ has property $\theta(\sigma)$  in a group $G$.
  \end{enumerate}
\end{defn}

According to this definition every subgroup $H$ of a group $G$ has the given  $\sigma$-property for the right choice of $\sigma$. Moreover if the property $\theta$ is fixed there exists the least  partition $\sigma$ for which $H$ has the property $\theta(\sigma)$ in $G$.

\begin{pr}\label{pr1}
Let $\theta$ be some $\sigma$-property and $H$ be a subgroup of a group $G$.

\begin{enumerate}
  \item Is there an effective algorithm which checks if $H$ has the property $\theta(\sigma)$ in $G$ when $\sigma$ is given?

  \item Is there an effective algorithm which finds the least $\sigma$ for which $H$ has the property $\theta(\sigma)$ in $G$?
\end{enumerate}
\end{pr}

The aim of this paper is to give the answers to this question when $\theta\in\{\sigma$-nilpotency, $\sigma$-solubility, $\sigma$-subnormality, $\sigma$-$p$-permutability$\}$ and by effective algorithm we mean an algorithm which works in  polynomial time (in $n$) for a permutation group of degree $n$.

\section{The Main Results}

Let $\sigma$ be a partition of the set of all primes.
Recall \cite[Proposition 2.3]{skiba2015sigma} that a group $G$ is called {\bf$\sigma$-\emph{nilpotent}} if it has a normal Hall $\sigma_i$-subgroup for every $\sigma_i\in\sigma$;
 a group $G$ is called {\bf $\sigma$-\emph{soluble}}  \cite[Definition 2.3]{skiba2015sigma} if every its chief factor is a $\sigma_i$-group for some $\sigma_i\in\sigma$.
A subgroup $H$ of a group $G$ is said to be {\bf$\sigma$-\emph{subnormal}} \cite[Definition 1.1]{skiba2015sigma} if there is a chain of subgroups $H=H_0\leq\dots\leq H_n=G$ with $H_{i-1}\trianglelefteq H_i$ or $H_i/(H_{i-1})_{H_i}$ is a $\sigma_i$-group for some $\sigma_i\in\sigma$ for all $1\leq i\leq n$.

Recall that a subgroup $H$ of a
group $G$ is called {\bf$\sigma$-\emph{permutable}} \cite{skiba2015sigma} if $G$ has a Hall $\sigma_i$-subgroup $H_i$ with $HH_i^x=H_i^xH$
  for every $x\in G$ and $\sigma_i\in\sigma$. Hence the concept of $\sigma$-permutability is defined not for all groups in the general. The
  exception is the case when  every element of $\sigma$ consists of one element. In this case the concepts of $\sigma$-permutable and $S$-permutable subgroups coincides.

  One of the main properties of Hall $\pi$-subgroups is that their images  are maximal among $\pi$-subgroups in every epimorphic image of a group. A subgroup $H$ of $G$ is said to be a $\mathfrak{G}_\pi$-\emph{projector} of $G$ \cite[III, Definition 3.2]{Doerk1992} if $HN/N$ is  maximal among $\pi$-subgroups of  $G/N$ for every $N\trianglelefteq G$. Note that   $\mathfrak{G}_\pi$-projectors exist in every group by  \cite[III, Theorem 3.10]{Doerk1992}.
A subgroup $H$ of a group $G$ is said to be {\bf$\sigma$-$p$-\emph{permutable}} \cite[Definition 2]{murashka2018generalization}
if for all $\sigma_i\in\sigma$  there is a $\mathfrak{G}_{\sigma_i}$-projector $H_i$ of $G$ such that $HH_i^x=H_i^xH$
  for every $x\in G$. According to \cite[Theorem 1]{murashka2018generalization}   $\sigma$-$p$-permutability does not depend on the choice of projectors, i.e.
  a subgroup $H$ of a group $G$ is   $\sigma$-$p$-permutable
if and only if $HH_i^x=H_i^xH$ for every $\mathfrak{G}_{\sigma_i}$-projector $H_i$ of $G$  for all  $\sigma_i\in\sigma$ and $x\in G$. Hence if  $G$ has a Hall $\sigma_i$-subgroup for all $\sigma_i\in\sigma$, then the sets of $\sigma$-permutable and $\sigma$-$p$-permutable subgroups coincides.

\begin{thm}\label{thm1}
  The $\sigma$-nilpotency, $\sigma$-solubility, $\sigma$-subnormality and $\sigma$-$p$-permutability are $\sigma$-properties.
\end{thm}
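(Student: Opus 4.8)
The plan is to verify, for each of the four candidates $\theta$, the two defining conditions of a $\sigma$-property. Condition~(2) is immediate in all four cases: if $\sigma=\{\pi(G)\}$, then $G$ is its own normal Hall $\pi(G)$-subgroup (so $G$ is $\sigma$-nilpotent); every chief factor of $G$ is a $\pi(G)$-group (so $G$ is $\sigma$-soluble); the one-step chain $H\le G$ satisfies that $G/H_{G}$ is a $\pi(G)$-group (so $H$ is $\sigma$-subnormal); and $G$ itself is a $\mathfrak{G}_{\pi(G)}$-projector of $G$ permuting with every subgroup (so $H$ is $\sigma$-$p$-permutable). Thus everything reduces to condition~(1): if $H$ has $\theta(\sigma^1)$ and $\theta(\sigma^2)$ in $G$, then it has $\theta(\sigma^1\cap\sigma^2)$ in $G$, where $\sigma^1\cap\sigma^2$ denotes the partition of $\pi(G)$ into the nonempty sets $\sigma^1_i\cap\sigma^2_j$.

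For $\sigma$-nilpotency I would use that a $\sigma$-nilpotent group is the internal direct product of its normal Hall $\sigma_i$-subgroups (the factors have pairwise coprime orders, hence centralize one another, and the product of their orders equals $|G|$ since the parts of $\sigma$ exhaust $\pi(G)$). Writing $G=A_1\times\dots\times A_r$ and $G=B_1\times\dots\times B_s$ for the $\sigma^1$- and $\sigma^2$-decompositions, each $A_i\cap B_j$ is normal in $G$ as an intersection of normal subgroups, is a $(\sigma^1_i\cap\sigma^2_j)$-group, and is in fact a \emph{Hall} $(\sigma^1_i\cap\sigma^2_j)$-subgroup, because for $p\in\sigma^2_j$ the subgroup $B_j$ contains a Sylow $p$-subgroup of $G$, hence of $A_i$. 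Since each $A_i$ is itself $\sigma^2$-nilpotent, $A_i=\prod_j(A_i\cap B_j)$, so $G=\prod_{i,j}(A_i\cap B_j)$ displays $G$ as a direct product of normal Hall subgroups indexed by the parts of $\sigma^1\cap\sigma^2$; hence $G$ is $(\sigma^1\cap\sigma^2)$-nilpotent. For $\sigma$-solubility the argument is shorter: a chief factor of $G$ is, by hypothesis, both a $\sigma^1_i$-group and a $\sigma^2_j$-group for suitable $i,j$, hence a $(\sigma^1_i\cap\sigma^2_j)$-group, so every chief factor is $(\sigma^1\cap\sigma^2)$-primary.

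The main difficulty is $\sigma$-subnormality. I would argue by induction on $|G|$, using the standard closure properties of $\sigma$-subnormal subgroups established in \cite{skiba2015sigma}: $\sigma$-subnormality is inherited by intermediate subgroups, is preserved under intersection with an arbitrary subgroup and under homomorphic images, and the intersection of two $\sigma$-subnormal subgroups is $\sigma$-subnormal, so (writing $\sigma=\sigma^1\cap\sigma^2$) there is a smallest $\sigma$-subnormal subgroup $M$ of $G$ containing $H$. If $H=G$ or $H\trianglelefteq G$ there is nothing to prove. If $M<G$, then $H$ is $\sigma^1$- and $\sigma^2$-subnormal in $M$, hence $\sigma$-subnormal in $M$ by induction, and since $M$ is $\sigma$-subnormal in $G$ this forces $M=H$; so we may assume no proper $\sigma$-subnormal subgroup of $G$ contains $H$, which in particular gives $H^{G}=G$. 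Applying induction to the penultimate term $K$ of a shortest $\sigma^1$-subnormal chain for $H$ (one checks that $K$ is proper and cannot be normal in $G$, so that $G/K_{G}$ is a $\sigma^1_i$-group, and that $H$ is $\sigma$-subnormal in $K$), and symmetrically to the corresponding term of a $\sigma^2$-chain, reduces the problem to one irreducible configuration: $H<G$ is not normal, $H^{G}=G$, and there are proper normal subgroups $N_1,N_2$ of $G$ with $G/N_1$ a $\sigma^1_i$-group and $G/N_2$ a $\sigma^2_j$-group. Ruling this out is where I expect the real work to lie: induction already gives that $H\cap N_1\cap N_2$ is $\sigma$-subnormal in $G$ and that $H/(H\cap N_1\cap N_2)$ embeds in a direct product of a $\sigma^1_i$-group and a $\sigma^2_j$-group, but turning this into a $\sigma$-subnormal chain running \emph{through} $H$ seems to require a genuine structural statement about $\sigma$-subnormal subgroups $H$ with $G=HN$, $N\trianglelefteq G$ --- precisely the point at which a naive concatenation of the $\sigma^1$- and $\sigma^2$-chains fails, since a $\sigma^1_i$-primary factor need not be $(\sigma^1\cap\sigma^2)$-primary.

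For $\sigma$-$p$-permutability I would invoke \cite[Theorem~1]{murashka2018generalization}: since the property is independent of the chosen projectors, ``$H$ is $\sigma^1$-$p$-permutable in $G$'' just says $HP=PH$ for every $\mathfrak{G}_{\sigma^1_i}$-projector $P$ of $G$ and every $i$ (and likewise for $\sigma^2$). Fix a part $C=\sigma^1_i\cap\sigma^2_j$ of $\sigma^1\cap\sigma^2$ and a $\mathfrak{G}_C$-projector $V$ of $G$; the goal is $HV=VH$. Since $\mathfrak{G}_C\subseteq\mathfrak{G}_{\sigma^1_i}$ and $\mathfrak{G}_C\subseteq\mathfrak{G}_{\sigma^2_j}$, one can, using the transitivity and inheritance properties of projectors for Schunck classes (\cite[III]{Doerk1992}), place $V$ inside a $\mathfrak{G}_{\sigma^1_i}$-projector $P$ and a $\mathfrak{G}_{\sigma^2_j}$-projector $Q$ of $G$ and recognize it as a $\mathfrak{G}_C$-projector of $P$, of $Q$, and (using $HP=PH$ and $HQ=QH$) of $HP$, $HQ$ and $HP\cap HQ$. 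The plan is then to deduce $HV=VH$ by working inside $HP\cap HQ$, from the fact that $H$ permutes with the relevant $\sigma^1_i$- and $\sigma^2_j$-projectors there; as with $\sigma$-subnormality, this reductive bookkeeping is routine, and the substantive point --- that permutability with a $\sigma^1_i$-projector and with a $\sigma^2_j$-projector forces permutability with the smaller $\mathfrak{G}_C$-projector $V$ --- is where the work concentrates, most plausibly via a characterization of $\sigma$-$p$-permutability (in the spirit of \cite{murashka2018generalization}) manifestly compatible with the fact (shown above) that being both $\sigma^1$-nilpotent and $\sigma^2$-nilpotent implies being $(\sigma^1\cap\sigma^2)$-nilpotent.
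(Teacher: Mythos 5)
Your treatments of $\sigma$-nilpotency and $\sigma$-solubility are correct and essentially identical to the paper's (intersections of normal Hall subgroups, respectively of the two primary decompositions of a chief factor). The problem is that the two hard cases are left open, and you say so yourself. For $\sigma$-subnormality, the ``irreducible configuration'' you could not rule out ($H<G$ non-normal, $H^G=G$, with $G/N_1$ a $\sigma^1_i$-group and $G/N_2$ a $\sigma^2_j$-group) is in fact killed by one more application of the same quotient trick you already used, not by any structural theorem about $G=HN$: in your reduced situation no proper subgroup of $G$ containing $H$ is $(\sigma^1\cap\sigma^2)$-subnormal in $G$, and the cores of the penultimate terms (equivalently, of maximal subgroups $M\geq H$) must be trivial, because if $M_G\neq 1$ then $HM_G/M_G$ is $\sigma^1$- and $\sigma^2$-subnormal in $G/M_G$ by Lemma \ref{lem1}(2), hence $(\sigma^1\cap\sigma^2)$-subnormal there by minimality, so $HM_G$ is $(\sigma^1\cap\sigma^2)$-subnormal in $G$ by Lemma \ref{lem1}(3); since $HM_G\leq M<G$, induction makes $H$ $(\sigma^1\cap\sigma^2)$-subnormal in $HM_G$ and hence in $G$, a contradiction. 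With the cores trivial, $G\simeq G/(M_1)_G$ is itself a $\sigma^1_i$-group and $G\simeq G/(M_2)_G$ a $\sigma^2_j$-group, so $G$ is a $(\sigma^1_i\cap\sigma^2_j)$-group and every subgroup is $(\sigma^1\cap\sigma^2)$-subnormal. This is exactly the paper's minimal-counterexample argument; as written, your case~3 stops one step short of it.

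For $\sigma$-$p$-permutability the gap is more serious, because the reductions you sketch are themselves unjustified: for general Schunck classes a $\mathfrak{G}_{\sigma^1_i\cap\sigma^2_j}$-projector of $G$ need not be contained in any $\mathfrak{G}_{\sigma^1_i}$-projector, and a projector of a projector (or of $HP$, $HQ$, $HP\cap HQ$) need not be a projector of $G$, so ``recognize $V$ as a $\mathfrak{G}_C$-projector of $P$, of $Q$, and of $HP\cap HQ$'' cannot simply be invoked; and the ``substantive point'' you defer is precisely the content of the proof. The paper's route is different and self-contained: it first establishes the criterion of Theorem \ref{thm5} --- $H$ is $\sigma$-$p$-permutable in $G$ iff $H^G/H_G$ is $\sigma$-nilpotent (cf.\ Lemma \ref{t1}) and $\mathrm{O}_{\sigma_i}(H/H_G)$ is normalized by $\mathrm{O}^{\sigma_i}(G)$ for every $\sigma_i\in\sigma(H/H_G)$ --- and then case~4 of Theorem \ref{thm1} follows from your nilpotency case together with Lemma \ref{l2}: the subgroup $H_{ij}/H_G=\mathrm{O}_{\sigma^1_i\cap\sigma^2_j}(H/H_G)$ is characteristic in $\mathrm{O}_{\sigma^1_i}(H/H_G)$, hence normalized by $\mathrm{O}^{\sigma^1_i}(G/H_G)$ and, symmetrically, by $\mathrm{O}^{\sigma^2_j}(G/H_G)$, and $\mathrm{O}^{\sigma^1_i\cap\sigma^2_j}(G/H_G)=\mathrm{O}^{\sigma^1_i}(G/H_G)\,\mathrm{O}^{\sigma^2_j}(G/H_G)$. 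Such a concrete characterization is exactly what your sketch postpones; without it (or a completed projector argument) cases 3 and 4 of the theorem remain unproved in your proposal.
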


If a group $G$ is fixed, the above mentioned $\sigma$-properties can be defined only using the partition $\sigma(G)=\{\sigma_i\cap\pi(G)\mid\sigma_i\in\sigma\}$ of $\pi(G)$. Therefore it makes sense in the solution of Problem \ref{pr1} only consider the partitions of $\pi(G)$.  The answer to Problem \ref{pr1}(1) is given in

\begin{thm}\label{thm2}
  Let $H, K\leq G\leq S_n$ with $K\trianglelefteq G$ and $\sigma$ be a partition of $\pi(G)$. In polynomial time in $n$ one can check:
  \begin{enumerate}
  \item If $G/K$ is $\sigma$-nilpotent.

  \item If $H/K$ is $\sigma$-subnormal in $G/K$.

  \item If $H/K$ is $\sigma$-$p$-permutable in $G/K$.

 \item If $G/K$ is $\sigma$-soluble. In case of affirmative answer if $H/K$ is $\sigma$-permutable in $G/K$.
  \end{enumerate}
\end{thm}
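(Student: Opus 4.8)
The plan is to reduce each item to a bounded number of polynomial-time group-theoretic queries.

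\textbf{Reduction and toolbox.} First I would pass to the case $K=1$: by the standard polynomial-time algorithms for computing in quotient groups one produces a permutation group of degree polynomial in $n$ together with an effective isomorphism onto $G/K$, so from now on $G\leq S_n$, $H\leq G$, and $\sigma$ is a partition of $\pi(G)$ (this partition, together with $|G|$ and the $\sigma_i$-parts $|G|_{\sigma_i}$, is computable in polynomial time since every prime divisor of $|G|$ is at most $n$; also $|\sigma|\leq|\pi(G)|$ is polynomially bounded). I will use freely the following polynomial-time subroutines for any $L\leq S_n$: Sylow $p$-subgroups for any prime $p\leq n$; normal closures $X^{L}$ and cores $X_{L}$; intersections and orders of subgroups; a chief series of $L$ with the prime divisors of each chief factor; a minimal normal subgroup of $L$; and a complement to an elementary abelian normal subgroup when one exists (linear algebra over a field of size $\leq n$). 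From Sylow subgroups and normal closures one obtains in polynomial time, for any set of primes $\pi$, the subgroup $O^{\pi}(L)=\langle P:\ P\in\mathrm{Syl}_q(L),\ q\in\pi(L)\setminus\pi\rangle^{L}$, i.e.\ the smallest normal subgroup of $L$ with $\pi$-quotient.

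\textbf{$\sigma$-nilpotency and $\sigma$-solubility.} For (1): $G$ is $\sigma$-nilpotent iff it has a normal Hall $\sigma_i$-subgroup for every $i$, and a finite group has a normal Hall $\pi$-subgroup iff $O^{\pi'}(G)$ (with $\pi'=\pi(G)\setminus\pi$) is a $\pi$-group, in which case this is the Hall subgroup in question; so one computes $O^{\sigma_i'}(G)$ for each $\sigma_i\in\sigma$ and checks that no prime outside $\sigma_i$ divides its order. For the first half of (4): $G$ is $\sigma$-soluble iff every chief factor is a $\sigma_i$-group for some $i$; compute one chief series and test for each factor whether its set of prime divisors lies in a single $\sigma_i$ — by Jordan--Hölder this is independent of the chosen chief series. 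If $G$ is $\sigma$-soluble then $G$ has a Hall $\sigma_i$-subgroup for every $i$, so by the remark preceding Theorem~\ref{thm1} $\sigma$-permutability and $\sigma$-$p$-permutability coincide in $G$, and the second half of (4) becomes an instance of (3).

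\textbf{$\sigma$-subnormality.} I would first record three elementary facts, proved by manipulating chains: $\sigma$-subnormality is transitive (concatenate chains); if $H\leq M\leq G$ and $H$ is $\sigma$-subnormal in $G$ then $H$ is $\sigma$-subnormal in $M$ (intersect the chain with $M$, using that $(A_i\cap M)/(A_{i-1}\cap M)_{A_i\cap M}$ is a subquotient of $A_i/(A_{i-1})_{A_i}$); hence the intersection of finitely many $\sigma$-subnormal subgroups of $G$ is $\sigma$-subnormal in $G$. Observe next that for $L\leq G$ the subgroups $H^{L}$ and $H\,O^{\sigma_i}(L)$ are $\sigma$-subnormal in $L$ and contain $H$. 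The algorithm sets $D_0=G$ and, while $D_j\neq H$, puts $D_{j+1}=H^{D_j}\cap\bigcap_{\sigma_i\in\sigma}H\,O^{\sigma_i}(D_j)$; if $D_{j+1}=D_j$ it returns ``not $\sigma$-subnormal''. Each $D_{j+1}$ is $\sigma$-subnormal in $D_j$ and contains $H$ and $|D_{j+1}|\leq|D_j|$, so the process halts after at most $\log_2|G|+1$ steps; if it reaches $H$, transitivity yields that $H$ is $\sigma$-subnormal in $G$. Conversely, if $H$ is $\sigma$-subnormal in $G$ then by inheritance it is $\sigma$-subnormal in every $D_j$, and inspecting the topmost step of a $\sigma$-subnormal chain from $H$ to $D_j$ shows $D_{j+1}\lneq D_j$ and that $H$ remains $\sigma$-subnormal in $D_{j+1}$; so the process must reach $H$. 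Thus the algorithm is correct, and each step uses only the toolbox above.

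\textbf{$\sigma$-$p$-permutability --- the main obstacle.} Two things require genuine work. First, one must build a $\mathfrak{G}_{\sigma_i}$-projector of $G$ in polynomial time; I would use the Schunck-class recursion: if $G$ is a $\sigma_i$-group return $G$; if there is a minimal normal subgroup $N$ with $G/N$ not a $\sigma_i$-group, recursively compute a projector $\bar V$ of $G/N$, lift it to the proper subgroup $W\leq G$ with $W/N=\bar V$, and recurse on $W$ (a $\mathfrak{G}_{\sigma_i}$-projector of $W$ is one of $G$); otherwise $G$ has a \emph{unique} minimal normal subgroup $N$ with $G/N$ a $\sigma_i$-group, and the projector is a maximal $\sigma_i$-subgroup supplementing $N$ --- a complement if $N$ is abelian (linear algebra over $\mathbb{F}_p$, $p\leq n$), and otherwise an almost-simple computation that is polynomial time by the known structure of almost simple permutation groups. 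Second, and this is the crux, $HH_i^{x}=H_i^{x}H$ for all $x\in G$ cannot be verified by running over the (possibly exponentially many) conjugates $H_i^{x}$. Here I would invoke the characterization of $\sigma$-$p$-permutability from \cite{murashka2018generalization} (or re-derive it by induction along a chief series), replacing ``$H$ permutes with every $\mathfrak{G}_{\sigma_i}$-projector of $G$'' by a bounded list of computable conditions: that $H$ be $\sigma$-subnormal in $G$ (decided above), that $HH_i$ be a subgroup for one chosen projector $H_i$, and normalizer containments of the shape $O^{\sigma_i}(G)\leq N_G(T)$ where $T=T(H,H_i)$ is built from $H$ and $H_i$ by the polynomial-time operations already in the toolbox. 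Running this for every $\sigma_i\in\sigma$ settles (3), and with the last sentence of the previous paragraph also the remaining part of (4). The delicate points are therefore the projector computation in the non-$\sigma_i$-separable case and the precise identification of the finite list of polynomial-time tests equivalent to $\sigma$-$p$-permutability.
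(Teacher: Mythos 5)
Your treatment of $\sigma$-nilpotency, $\sigma$-solubility and $\sigma$-subnormality is essentially sound: the test ``$\mathrm{O}^{\sigma_i'}(G)$ is a $\sigma_i$-group'' is a correct alternative to the paper's generator-based criterion, and your descending-intersection algorithm $D_{j+1}=H^{D_j}\cap\bigcap_i H\mathrm{O}^{\sigma_i}(D_j)$ is a valid variant of the paper's recursion (the paper instead descends into a single proper subgroup among $H^G$ and $H\mathrm{O}^{\sigma_i}(G)$ and bounds the depth by Lemma \ref{chain}). However, your opening reduction is false as stated: a quotient $G/K$ of a subgroup of $S_n$ need not admit a faithful permutation representation of degree polynomial in $n$ (a direct product of $k$ copies of $D_8$ inside $S_{4k}$ has an extraspecial quotient of order $2^{2k+1}$ whose minimal faithful permutation degree is exponential in $k$), so there is no algorithm producing the representation you invoke. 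This is repairable, since all your individual tests can be carried out on the pair $(G,K)$ in the Kantor--Luks quotient framework, which is exactly what the paper does (e.g.\ testing $[S_{\sigma_i},S_{\sigma_j}]\subseteq K$ and computing orders of full preimages), but the reduction to $K=1$ must be dropped.

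The genuine gap is item (3), and with it the second half of (4). You correctly observe that one cannot run over the conjugates $H_i^x$, but the replacement you offer is not a proof: the ``bounded list of computable conditions'' ($\sigma$-subnormality, $HH_i$ a subgroup for one projector, plus normalizer containments for an unspecified $T(H,H_i)$) is never written down, never proved equivalent to $\sigma$-$p$-permutability, and you yourself flag its ``precise identification'' as an open point. Your route moreover requires computing a $\mathfrak{G}_{\sigma_i}$-projector of $G$; the base case of your recursion asks for a maximal $\sigma_i$-subgroup supplementing a (possibly nonabelian, possibly non-simple, being a product of several simple factors) unique minimal normal subgroup, and neither this nor the computation of maximal $\pi$-subgroups in general is known to be polynomial time --- ``an almost-simple computation'' is not substantiated. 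The paper sidesteps projectors entirely: by Theorem \ref{thm5}, $H$ is $\sigma$-$p$-permutable in $G$ iff $H^G/H_G$ is $\sigma$-nilpotent and the full preimage $O(\sigma_i)$ of $\mathrm{O}_{\sigma_i}(H/H_G)$ satisfies $O(\sigma_i)^{\mathrm{O}^{\sigma_i}(G)}=O(\sigma_i)$ for every $\sigma_i\in\sigma(H/H_G)$, and each of these conditions is checkable with normal closures, cores, Sylow subgroups and Remark \ref{rem1} (with coincidence of $\sigma$-permutability and $\sigma$-$p$-permutability in the $\sigma$-soluble case supplied by Lemma \ref{coicide}). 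Without proving your claimed characterization, or importing a projector-free criterion of this kind, item (3) is not established.
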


The answer to Problem \ref{pr1}(2) is given in

\begin{thm}\label{thm3}
  Let $H, K\leq G\leq S_n$ with $K\trianglelefteq G$. In every of the following cases   in polynomial time in $n$
  \begin{enumerate}
  \item  One can find  the least partition  $\sigma$  of $\pi(G)$   for which  $G/K$ is $\sigma$-nilpotent.

  \item One can find  the least partition  $\sigma$  of $\pi(G)$ for which  $G/K$ is $\sigma$-soluble.

  \item One can find  the least partition  $\sigma$  of $\pi(G)$  for which   $H/K$ is
   $\sigma$-$p$-permutable in $G/K$. In particular, if $G/K$ is soluble, then one can find  the least partition  $\sigma$  of $\pi(G)$  for which  $H/K$ is $\sigma$-permutable in $G/K$.

  \end{enumerate}
\end{thm}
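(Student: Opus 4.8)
The plan is to reduce, as in the proof of Theorem~\ref{thm2}, to computations inside the quotient $\bar G:=G/K$ with the subgroup $\bar H:=H/K$; there the following are available in time polynomial in $n$: the order and prime set $\pi(\bar G)$, Sylow subgroups of $\bar G$, normal closures, for a given set of primes $\pi$ the residual $O^{\pi'}(\bar G)$ (the automatically normal subgroup generated by all Sylow $r$-subgroups of $\bar G$ with $r\in\pi$, whose quotient is a $\pi'$-group), a composition series of $\bar G$ with each factor given as a permutation group on at most $n$ points, and --- by Theorem~\ref{thm2} --- a test, for any prescribed partition, of whether $\bar H$ is $\sigma$-$p$-permutable in $\bar G$. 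The organising remark is that, by Theorem~\ref{thm1}, each of the three properties $\theta$ in the statement is a $\sigma$-property, so the family $\mathcal V$ of partitions $\sigma$ of $\pi(\bar G)$ for which the relevant object has property $\theta(\sigma)$ is non-empty (it contains the one-block partition), closed under the meet $\sigma^{1}\cap\sigma^{2}$, and, one checks, closed under coarsening of partitions. Being finite, non-empty and meet-closed, $\mathcal V$ has a unique finest member $\sigma^{\ast}$ --- the required output --- and $\sigma^{\ast}$ is exactly the partition of the equivalence relation ``$p$ and $q$ lie in a common block of every $\sigma\in\mathcal V$''. It thus suffices, for each $\theta$, to compute this relation (equivalently $\sigma^{\ast}$) in polynomial time.

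For $\sigma$-solubility I would argue directly. A chief factor of $\bar G$ is either abelian, hence an elementary abelian $p$-group, which is a $\{p\}$-group and so lies inside any block containing $p$; or non-abelian, with non-abelian simple composition factor $T$, and it is then a $\sigma_{i}$-group exactly when $\pi(T)\subseteq\sigma_{i}$. Hence $\bar G$ is $\sigma$-soluble if and only if $\pi(T)$ is contained in a single block of $\sigma$ for every non-abelian composition factor $T$ of $\bar G$. So I would compute a composition series of $\bar G$, form $\pi(T)$ for each non-abelian factor $T$ (it consists of primes at most the degree of $T$ and is found by trial division of $|T|$), and output the finest partition of $\pi(\bar G)$ in which each such $\pi(T)$ lies in one block, that is, the connected components of the hypergraph on $\pi(\bar G)$ whose hyperedges are these sets $\pi(T)$, computed by a single union-find pass. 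This is polynomial, and its correctness is immediate from the description of $\mathcal V$.

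For $\sigma$-nilpotency I would use that $\bar G$ has a normal Hall $\pi$-subgroup if and only if $O^{\pi'}(\bar G)$ is a $\pi$-group, together with the fact that the set of $\pi\subseteq\pi(\bar G)$ for which $\bar G$ has a normal Hall $\pi$-subgroup is closed under unions (the product of two normal Hall subgroups with disjoint prime supports is again a normal Hall subgroup). For each prime $p\in\pi(\bar G)$ form the non-decreasing chain $\pi_{0}=\{p\}$, $\pi_{k+1}=\pi\bigl(O^{\pi'_{k}}(\bar G)\bigr)$; it stabilises at a set $L_{p}$, and a short induction shows $L_{p}$ is the smallest subset of $\pi(\bar G)$ containing $p$ for which $\bar G$ has a normal Hall $L_{p}$-subgroup. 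Since $\bar G$ is $\sigma$-nilpotent if and only if it has a normal Hall $\sigma_{i}$-subgroup for every block $\sigma_{i}$, and (by union-closedness) this happens precisely when every block is closed under the map $q\mapsto L_{q}$, the answer $\sigma^{\ast}$ is the partition of $\pi(\bar G)$ into the connected components of the graph having an edge $\{p,q\}$ whenever $q\in L_{p}$. Each $L_{p}$ costs at most $|\pi(\bar G)|$ Sylow-and-normal-closure computations, so the whole procedure is polynomial; the components are then obtained by union-find.

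For $\sigma$-$p$-permutability the outer scheme is the same --- for each pair $\{p,q\}$ decide whether $p$ and $q$ are forced into a common block of $\sigma^{\ast}$, then return the resulting partition --- but the pairwise decision would be driven by the $\sigma$-$p$-permutability tester of Theorem~\ref{thm2}. The goal is a structural lemma describing how a $\mathfrak{G}_{\sigma_{i}}$-projector of $\bar G$, and hence the truth of ``$\bar H$ is $\sigma$-$p$-permutable in $\bar G$'', behaves when one block of $\sigma$ is split; from such a lemma one reads off a polynomially bounded, explicitly given list of partitions such that some $\sigma\in\mathcal V$ separates $p$ and $q$ if and only if $\bar H$ is $\sigma$-$p$-permutable in $\bar G$ for one of them, after which $\sigma^{\ast}$ is assembled by union-find as before. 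Finally, if $\bar G$ is soluble it has a Hall $\sigma_{i}$-subgroup for every $\sigma_{i}$, so every $\mathfrak{G}_{\sigma_{i}}$-projector of $\bar G$ is such a Hall subgroup and, as noted after the definitions, $\sigma$-$p$-permutability coincides with $\sigma$-permutability; hence the same $\sigma^{\ast}$ is the least partition for which $\bar H$ is $\sigma$-permutable in $\bar G$. I expect this structural lemma to be the main obstacle: for $\sigma$-nilpotency and $\sigma$-solubility the dependence on $\sigma$ is transparent --- through normal Hall subgroups and through composition factors --- whereas for $\sigma$-$p$-permutability one must first determine exactly which block-merges the property forces before the tester of Theorem~\ref{thm2} can be invoked a bounded number of times.
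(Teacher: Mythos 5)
Parts (1) and (2) of your proposal are correct. For $\sigma$-solubility you argue exactly as the paper does in substance: take a composition series through $K$, observe that only the prime sets of the (non-abelian) factors constrain $\sigma$, and merge them by connectivity; this is the paper's construction. For $\sigma$-nilpotency your route is genuinely different but valid: you characterize existence of a normal Hall $\pi$-subgroup by $\mathrm{O}^{\pi'}(\bar G)$ being a $\pi$-group, iterate $p\mapsto L_p$ to get the per-prime closures, and use union-closedness of the admissible $\pi$'s to read off the least partition from a graph; each $L_p$ needs at most $|\pi(\bar G)|$ Sylow-plus-normal-closure computations (Theorem~\ref{Basic}, Remark~\ref{rem1}), so this is polynomial. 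The paper instead decomposes a generating set into $\sigma_i$-parts (Lemma~\ref{decomp}) and iteratively merges blocks joined by commutator or prime-spill obstructions, with correctness supplied by Theorem~\ref{thm4}; your version buys a cleaner correctness argument at the cost of more Sylow computations, the paper's works directly from generators.

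The genuine gap is part (3), which is the hardest third of the theorem. Your scheme --- decide for each pair $\{p,q\}$ whether some admissible partition separates them, by calling the tester of Theorem~\ref{thm2} --- is not a proof until you exhibit a polynomially bounded list of partitions whose testing decides that question, and you explicitly defer this to a ``structural lemma'' you do not prove; a priori one would have to test exponentially many partitions, so nothing in your text establishes polynomial time for this case. The paper's argument supplies precisely the missing ingredient, and it never needs to call the full permutability tester: first reduce modulo $H_G$ (the least partition for $H/K$ in $G/K$ is the least one for $H/H_G$ in $G/H_G$ with the primes of $\pi(G/K)\setminus\pi(G/H_G)$ added as singletons); start from the least partition $\sigma^0$ making $H^G/H_G$ $\sigma$-nilpotent, which is below the answer by Lemma~\ref{t1} and computable by part (1); then, writing $H_a$ for the preimage of $\mathrm{O}_{a}(H/H_G)$, iteratively merge two blocks $a,b$ whenever $H_a^{\mathrm{O}^{b'}(G)}\neq H_a$ or $H_b^{\mathrm{O}^{a'}(G)}\neq H_b$. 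The criterion of Theorem~\ref{thm5}, Lemma~\ref{l2} and the identity $\mathrm{O}^{\pi_1\cap\pi_2}(G)=\mathrm{O}^{\pi_1}(G)\mathrm{O}^{\pi_2}(G)$ show each merge is forced for every admissible partition, and the projector argument shows that a merge-free partition is already admissible, hence least; the iteration stabilizes in at most $n$ rounds. Without an argument of this kind (or a proved substitute for your structural lemma), part (3) of your proposal remains a plan rather than a proof.
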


\section{Preliminaries}

\subsection{Group Theory}

Recall that $S_n$ is the symmetric group of degree $n$; $\langle X\rangle$ is a group generated by $X$; $H^G$ denotes the smallest normal subgroup of $G$ which contains $H$;
$H_G$ denotes the greatest normal subgroup of $G$  contained in $H$; $\mathbb{P}$ is the set of all primes;  $\mathrm{O}_\pi(G)$ is the greatest normal $\pi$-subgroup of $G$ and $\mathrm{O}^\pi(G)$ is the smallest normal subgroup of $G$  of $\pi$-index for $\pi\subseteq\mathbb{P}$; $\pi(n)$ is the set of all different  prime divisors of a natural number $n$; $\pi(G)=\pi(|G|)$ for a group $G$. If $\sigma$ is a partition of $\pi(G)$ and $H/K$ is a section  of a group $G$, then $\sigma(H/K)=\{\sigma_i\cap\pi(H/K)\mid\sigma_i\in\sigma\}$ is a partition of $\pi(H/K)$.

\subsection{Algorithms}

We use standard computational conventions of abstract finite groups equipped
with poly\-nomial-time procedures to compute products and inverses of elements (see \cite[Chapter 2]{Seress2003}).
For both input and output, groups are specified by generators. We will consider only $G=\langle S\rangle\leq S_n$ with $|S|\leq n^2$. If necessary, Sims' algorithm \cite[Parts 4.1 and 4.2]{Seress2003} can be used to arrange that $|S|\leq n^2$. Quotient groups are specified by generators of a group and its normal subgroup.
We need the following well known basic tools in our proofs (see, for example \cite{Kantor1990a} or \cite{Seress2003}).

\begin{thm}\label{Basic}
  Given $G = \langle S\rangle\leq S_n$, in polynomial time one can solve the following problems:

  \begin{enumerate}
    \item  Find $|G|$.

    \item Given normal subgroups $A$ and $B$ of $G$ with $A\leq B$,   find a composition series for $G$ containing them.

    \item Given $T\subseteq G$ find $\langle T\rangle^G$.

    \item (mod CFSG) Given $N, K \leq S_n$ such that $N/K$ is normalized by $G/K$,
     find $C_{G/K}(N/K)$ \cite[P6(i)]{Kantor1990a}.

    \item (mod CFSG) Given $H\leq G$ find $H_G$ \cite[P5(i)]{Kantor1990a}.

    \item (mod CFSG) Given a prime $p$ dividing $|G|$, find a Sylow $p$-subgroup $P$ of $G$ \cite{Kantor1990}.

\item Given   $H=\langle S_1\rangle, K=\langle S_2\rangle \leq G$ find $\langle H, K\rangle=\langle S_1, S_2\rangle$ and $[H, K]=\langle \{[s_1, s_2]\mid s_1\in S_1, s_2\in S_2\rangle^{\langle H, K\rangle}$.

\item Given $H, K\leq G$ with $K\trianglelefteq G$ find $H\cap K$.
  \end{enumerate}
\end{thm}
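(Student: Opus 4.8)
This statement is a compendium of standard permutation-group routines, so the plan is to prove the elementary assertions directly from the Schreier--Sims machinery and to quote the deep ones from the cited literature. First I would dispatch items (1), (3) and (7), none of which need CFSG. For (1), run Schreier--Sims on $G=\langle S\rangle$ to build a base and strong generating set; then $|G|$ is the product of the fundamental orbit lengths \cite[Parts 4.1 and 4.2]{Seress2003}. For (3), compute $\langle T\rangle^G$ by the normal-closure loop: set $N:=\langle T\rangle$ and, while some conjugate $t^{s}$ with $t\in T$ and $s\in S$ lies outside $N$, adjoin it and rebuild a strong generating set; each proper enlargement at least doubles $|N|$, so at most $\log_2|G|=O(n\log n)$ rounds occur. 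For (7) the join $\langle H,K\rangle=\langle S_1,S_2\rangle$ is immediate, and since $[H,K]$ is precisely the normal closure in $\langle H,K\rangle$ of the finitely many commutators $[s_1,s_2]$ with $s_1\in S_1$ and $s_2\in S_2$, it reduces to (3).

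For (2) I would invoke the polynomial-time computation of composition series for permutation groups \cite{Seress2003}: starting from the chain $1\trianglelefteq A\trianglelefteq B\trianglelefteq G$, refine each of the three factors $A$, $B/A$ and $G/B$ into a composition series and concatenate them, obtaining a composition series of $G$ that passes through both $A$ and $B$. The three items flagged mod CFSG I would simply cite verbatim: the centralizer $C_{G/K}(N/K)$ of a normal section and the core $H_G$ are delivered in polynomial time (modulo CFSG) by the Kantor--Luks procedures \cite[P6(i) and P5(i)]{Kantor1990a}, and a Sylow $p$-subgroup is produced in polynomial time (modulo CFSG) by Kantor's algorithm \cite{Kantor1990}.

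The step I expect to be the genuine obstacle is the intersection in (8), and it is the one place where the hypothesis $K\trianglelefteq G$ is indispensable, since intersection of arbitrary permutation groups is not known to be polynomial. Because $K$ is normal, the assignment $h\mapsto hK$ is a homomorphism $H\to G/K$ whose kernel is exactly $H\cap K$, so the task is a kernel computation relative to the quotient $G/K$. The subtlety I would flag is that $G/K$ need not possess a faithful permutation representation of degree polynomial in $n$ (minimal faithful degree is not monotone under quotients), so one cannot in general materialise $G/K$ as permutations. Instead one works inside the quotient-group framework of \cite{Kantor1990a} and \cite{Seress2003}, in which equality of cosets $h_1K=h_2K$ is tested in polynomial time via a strong generating set for $K$, and the kernel is returned as a strong generating set for $H\cap K$. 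This computation uses only the normal structure and is consistent with the fact that, unlike items (4)--(6), item (8) is not flagged mod CFSG; it completes the list.
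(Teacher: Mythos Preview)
Your proof sketch is correct and, in fact, strictly more detailed than what the paper offers: the paper does not prove Theorem~\ref{Basic} at all. It is stated as a list of ``well known basic tools'' with a blanket reference to \cite{Kantor1990a} and \cite{Seress2003}, together with pinpoint citations for items (4)--(6); no argument is given in the text. So there is nothing to compare against beyond the citations themselves.

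Your attributions match the paper's: Schreier--Sims for (1), normal closure for (3) and (7), composition-series refinement for (2), and the Kantor--Luks and Kantor results for (4)--(6). Your treatment of (8) via the kernel of $H\to G/K$ in the quotient-group data structure of \cite{Kantor1990a,Seress2003} is the standard one and correctly isolates why normality of $K$ is needed; the paper gives no hint of this and simply lists the item. If anything, you could shorten the discussion of (8): once one has a strong generating set for $K$, membership in $K$ is polynomial, and then $H\cap K$ (which is normal in $H$ since $K\trianglelefteq G$) falls out of a straightforward sifting/kernel computation without needing to dwell on the degree blow-up of $G/K$.
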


Note that $H\subseteq K$ iff $\langle H, K\rangle=K$. From 1 and 7 of Theorem \ref{Basic} directly follows

\begin{cor}\label{CorBasic}
  Given $G, G_1, G_2\leq S_n$, in polynomial time one can solve the following problems:
 \begin{enumerate}
   \item Check if $G_1=G_2$;
   \item Check if $G_1\subseteq G_2$;
   \item Compute $\pi(G)$ and $\pi(G/K)$ for $K\trianglelefteq G$.
 \end{enumerate}
\end{cor}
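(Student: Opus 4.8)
The plan is to obtain all three items directly from Theorem~\ref{Basic}(1) and Theorem~\ref{Basic}(7), using only Lagrange's theorem as additional input; the remark preceding the statement already points to this.

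First I would settle item~2. Since $G_2\leq\langle G_1,G_2\rangle$ always holds, the containment $G_1\subseteq G_2$ is equivalent to $\langle G_1,G_2\rangle=G_2$, and — again because $G_2\leq\langle G_1,G_2\rangle$ — this is in turn equivalent to the single numerical equality $|\langle G_1,G_2\rangle|=|G_2|$. By Theorem~\ref{Basic}(7) a generating set of $\langle G_1,G_2\rangle$ is computable in polynomial time, and by Theorem~\ref{Basic}(1) the two orders are computable in polynomial time; these are integers of bit-length $O(n\log n)$, so comparing them is cheap. Item~1 follows at once: $G_1=G_2$ if and only if $G_1\subseteq G_2$ and $G_2\subseteq G_1$, equivalently if and only if $|\langle G_1,G_2\rangle|=|G_1|=|G_2|$, all of which are decided in polynomial time by the preceding step.

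For item~3 I would use that $|G|$ divides $|S_n|=n!$ by Lagrange's theorem, so every prime dividing $|G|$ — hence every prime dividing $|G/K|=|G|/|K|$ — is at most $n$. Therefore $\pi(G)$ is precisely the set of primes $p\leq n$ dividing the integer $|G|$, and $\pi(G/K)$ is the set of primes $p\leq n$ dividing $|G|/|K|$. One computes $|G|$ and $|K|$ via Theorem~\ref{Basic}(1), produces the list of primes up to $n$ by the sieve of Eratosthenes in time polynomial in $n$, and for each of the at most $n$ candidate primes tests divisibility of the relevant integer; this is polynomial in $n$.

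The only step requiring a moment's thought is item~3: naively reading off the prime divisors of $|G|$ would seem to call for factoring a potentially enormous integer, but the bound $\pi(G)\subseteq\{p\in\mathbb{P}\mid p\leq n\}$ coming from $|G|\mid n!$ sidesteps this entirely. Everything else is an immediate consequence of the quoted parts of Theorem~\ref{Basic}.
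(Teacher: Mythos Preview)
Your proposal is correct and follows essentially the same route as the paper: the paper simply remarks that $H\subseteq K$ iff $\langle H,K\rangle=K$ and then states that the corollary follows directly from parts~1 and~7 of Theorem~\ref{Basic}, which is exactly what you unpack. Your additional observation that $\pi(G)\subseteq\{p\in\mathbb{P}\mid p\leq n\}$ (since $|G|\mid n!$) makes explicit a detail the paper leaves implicit, but the underlying argument is the same.
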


 \begin{remark}\label{rem1}
   Note that  $\mathrm{O}^\pi(G)=\langle P_i\mid P_i$ is a Sylow $p_i$-subgroup for $p_i\in\pi(G)\setminus\pi\rangle^G$. Hence
 it can be computed in polynomial time by 3 and 6 of Theorem \ref{Basic}.
 \end{remark}

According to  the following result \cite{Babai1986}, the lengthes of all chains of subgroups  in a permutation group are bounded:

\begin{lem}[\cite{Babai1986}]\label{chain}
   Given $G \leq S_n$ every chain of subgroups of $G$ has at most $2n-3$  members for $n\geq 2$.
\end{lem}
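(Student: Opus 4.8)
The plan is to reduce to the groups $S_n$ and $A_n$ and argue by simultaneous induction on $n$. Write $\ell(X)$ for the largest $m$ for which a finite group $X$ has a chain of subgroups $1=X_0<X_1<\dots<X_m=X$; as every chain of subgroups of $G$ extends to one of this shape for $X=G$ and $\ell(G)\le\ell(S_n)$ when $G\le S_n$, it suffices to show $\ell(S_n)\le 2n-3$ for $n\ge 2$, and it is convenient to prove at the same time that $\ell(A_n)\le 2n-4$. Before starting I would record two elementary observations: (a) if $N\trianglelefteq X$ then $\ell(X)\le\ell(N)+\ell(X/N)$ --- along a chain, a Dedekind-law argument shows each step must strictly enlarge $X_i\cap N$ inside $N$ or $X_iN/N$ inside $X/N$ --- and hence $\ell(A\times B)=\ell(A)+\ell(B)$ (the reverse inequality coming from concatenation of chains); (b) $\ell(X)=1+\max\{\,\ell(M):M\text{ a maximal subgroup of }X\,\}$.

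For the inductive step I would split $G\le S_n$ into three cases. If $G$ is intransitive, with an orbit of size $k$ for some $1\le k<n$, then $G\le S_k\times S_{n-k}$, and (a) together with the inductive hypothesis give $\ell(G)\le\ell(S_k)+\ell(S_{n-k})\le 2n-5$ (with $\ell(S_1)=0$). If $G$ is transitive and imprimitive with $m\ge 2$ blocks of size $k=n/m>1$, then $G$ embeds in $S_k\wr S_m\cong S_k^m\rtimes S_m$, so by (a) $\ell(G)\le m\,\ell(S_k)+\ell(S_m)\le m(2k-3)+(2m-3)=2n-m-3\le 2n-5$. There remains the case where $G$ is primitive. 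Granting the \emph{key claim} that every primitive $G\le S_n$ with $A_n\not\le G$ has $\ell(G)\le 2n-5$, I would dispose of $G\in\{S_n,A_n\}$ through (b): by the two previous cases and the key claim, the intransitive, imprimitive and primitive (not containing $A_n$) maximal subgroups of $S_n$ all have length $\le 2n-5$; the same bookkeeping for $A_n$ --- whose maximal subgroups are intransitive ones (among them $A_{n-1}$, with $\ell(A_{n-1})\le 2n-6$ by induction), contributing $\le 2n-6$, and imprimitive or primitive ones, contributing $\le 2n-5$ --- yields $\ell(A_n)\le 1+(2n-5)=2n-4$, and then $\ell(S_n)\le 1+(2n-4)=2n-3$. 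The finitely many small $n$ would be checked directly.

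The hard part will be the key claim: controlling $\ell(G)$ for a primitive $G$ other than $A_n$ and $S_n$. The elementary universal bound $|G|<4^n$ only gives $\ell(G)\le\log_2|G|\le 2n$, which is too weak, so I would use the structure of primitive permutation groups. By the O'Nan--Scott theorem in its sharp quantitative form (Mar\'oti, using the classification of finite simple groups), such a $G$ is one of finitely many groups of bounded degree, or satisfies $|G|\le n^{1+\log_2 n}$, or is contained in $S_m\wr S_r$ acting on $r$-tuples of $k$-subsets of an $m$-set with $(k,r)\ne(1,1)$ (so that $n=\binom{m}{k}^{r}$, whence $m=O(\sqrt n)$ and $r=O(\log n)$). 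In the first two cases $\ell(G)\le\log_2|G|\le(1+\log_2 n)\log_2 n=o(n)$; in the third I would avoid $\log_2|G|$ and use (a) instead, getting $\ell(G)\le r\,\ell(S_m)+\ell(S_r)\le r(2m-3)+(2r-3)=O(\sqrt n\log n)=o(n)$. In every case $\ell(G)=o(n)\le 2n-5$ once $n$ exceeds an explicit bound, the remaining small degrees being verified by hand; this closes the induction. (The original proof of Babai \cite{Babai1986} is more self-contained, bounding the orders of primitive groups by elementary means instead of invoking the classification.)
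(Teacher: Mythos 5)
The paper itself offers no argument for this lemma: it is imported verbatim from Babai \cite{Babai1986}, so the relevant comparison is with Babai's original proof. Your skeleton is essentially his: simultaneous induction on $n$ for $\ell(S_n)\le 2n-3$ and $\ell(A_n)\le 2n-4$, the subadditivity $\ell(X)\le\ell(N)+\ell(X/N)$, the reduction of intransitive and imprimitive subgroups to $S_k\times S_{n-k}$ and $S_k\wr S_m$ (giving $\le 2n-5$ and $\le 2n-m-3$), and a separate estimate for primitive groups not containing $A_n$, with $A_n$ and $S_n$ finished off via maximal subgroups; I see no circularity, since the $A_n$ step uses only smaller-degree hypotheses plus the key claim, and the $k=1$ intransitive case correctly falls back on $\ell(A_{n-1})\le 2n-6$, which is exactly why the two statements must be proved together. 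Where you genuinely diverge is the key claim: you bound primitive groups by Mar\'oti's CFSG-based theorem, getting $\ell(G)=O(\log^2 n)$ or $O(\sqrt n\log n)$, whereas the whole point of \cite{Babai1986} is to obtain $2n-3$ by elementary means, using Babai's classification-free bounds on orders of primitive groups. Your route buys a shorter argument and far stronger asymptotics for the primitive case, at the price of CFSG and of a finite verification you leave implicit: to close the induction you must make the constants in $m=O(\sqrt n)$, $r=O(\log n)$ explicit and check all primitive groups of the remaining small degrees (of the order of a few dozen with your crude estimates, Mathieu actions included) against $2n-5$, e.g.\ via $\ell(G)\le\log_2|G|$ and the known lists; this is routine but should be stated as part of the proof rather than waved at. One further point of wording: what you bound is the number of strict inclusions in a chain, which is what Babai proves; the lemma's phrase ``members'' must be read the same way, since already for $n=2$ the chain $1<S_2$ has two subgroups but length $1=2n-3$.
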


\begin{lem}\label{decomp}
  Let $G=\langle S\rangle\leq S_n$ and $\sigma=\{\sigma_1,\dots, \sigma_k\}$ be a partition of $\pi(G)$.
   In polynomial time one can find sets $S_{\sigma_1},\dots, S_{\sigma_k}$ such that   $S_{\sigma_i}$ consists of $\sigma_i$-elements for all $\sigma_i\in\sigma$ and $G=\langle S_{\sigma_1},\dots, S_{\sigma_k}\rangle$.
\end{lem}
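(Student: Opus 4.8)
The plan is to reduce the statement to the elementary fact that every permutation is the product of its pairwise commuting $\sigma_i$-parts, and then to observe that all the integer and group arithmetic involved can be carried out in polynomial time.

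First I would compute $\pi(G)$ in polynomial time by Corollary \ref{CorBasic}; this presents the partition $\sigma$ as a partition $\{\sigma_1,\dots,\sigma_k\}$ of a set of primes each of which is at most $n$. Recall that $|S|\leq n^2$. Now fix $s\in S$ and let $m=|s|$ be the order of the permutation $s$, which equals the least common multiple of the lengths of the cycles of $s$ and therefore can be computed in polynomial time; its binary length is polynomial in $n$ since $m\leq n!$. Every prime divisor of $m$ divides $|G|$ and so is at most $n$, hence trial division by the primes not exceeding $n$ yields the complete prime factorisation of $m$ in polynomial time. From this factorisation I read off, for each $i$, the number $m_i$ defined as the largest divisor of $m$ all of whose prime divisors lie in $\sigma_i$; then $m=m_1\cdots m_k$ and the numbers $m_1,\dots,m_k$ are pairwise coprime.

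Next, for every $i$ I use the Chinese Remainder Theorem to pick an integer $c_i$ with $c_i\equiv 1\pmod{m_i}$ and $c_i\equiv 0\pmod{m/m_i}$, and I set $s_i:=s^{c_i}$, computed by repeated squaring with $O(\log m)$ multiplications in $S_n$. Checking the congruences modulo each prime power dividing $m$ shows that $c_1+\dots+c_k\equiv 1\pmod{m}$, so $s=s_1\cdots s_k$; moreover $\gcd(m,c_i)=m/m_i$, so $s_i$ has order $m_i$ and hence is a $\sigma_i$-element, and the $s_i$ commute because they are powers of $s$. I place $s_i$ into the set $S_{\sigma_i}$, which is initialised empty; after processing all $s\in S$ I output $S_{\sigma_1},\dots,S_{\sigma_k}$.

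It then remains to check correctness and efficiency. Every element of $S_{\sigma_i}$ is by construction a $\sigma_i$-element lying in $G$, so $\langle S_{\sigma_1},\dots,S_{\sigma_k}\rangle\leq G$; conversely each $s\in S$ equals $s_1\cdots s_k\in\langle S_{\sigma_1},\dots,S_{\sigma_k}\rangle$, whence $G=\langle S\rangle\leq\langle S_{\sigma_1},\dots,S_{\sigma_k}\rangle$. The procedure runs through $O(n^2)$ generators and performs for each of them a bounded number of polynomial-time integer and permutation operations, so it terminates in polynomial time. The only point that needs any care --- and the closest thing here to an obstacle --- is exactly this polynomial-time bookkeeping: one must verify that $m=|s|$, although potentially exponentially large as an integer, still has polynomially many bits and only prime divisors $\leq n$, so that it can be factored, and that each exponentiation $s^{c_i}$ costs only $O(\log m)$ group multiplications.
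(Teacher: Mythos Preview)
Your argument is correct and follows essentially the same route as the paper: for each generator $s$ compute and factor its order (using that all prime divisors are at most $n$), then split $s$ into its pairwise commuting $\sigma_i$-components and collect these into the sets $S_{\sigma_i}$. The only cosmetic difference is that the paper first extracts the $p$-parts $s_{p}=s^{m/p^{\alpha}}$ and multiplies those with $p\in\sigma_i$, whereas you go straight to the $\sigma_i$-part via a CRT exponent $c_i$; either way one obtains a $\sigma_i$-element of order $m_i$ generating the Hall $\sigma_i$-subgroup of $\langle s\rangle$.
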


\begin{proof}
Since $|S|\leq n^2$, it is enough to prove that  for $s\in S$   in   polynomial time one can find elements $s_{\sigma_1}\dots s_{\sigma_k}$ such that $\langle s\rangle=\langle s_{\sigma_1},\dots, s_{\sigma_k}\rangle$ where $s_{\sigma_i}$ is an $\sigma_i$-element of $G$ (possibly $s_{\sigma_i}=1$) for every $\sigma_i\in\sigma$.

  By 1 of Theorem \ref{Basic} in  polynomial time one can find $m=|\langle s \rangle|$. Since every prime divisor of $|\langle s \rangle|$ is not greater than $n$, in  polynomial time one can decompose $m=\prod p_i^{\alpha_i}$. From $\langle s \rangle\leq S_n$ it follows that $\alpha_i\leq n$ for all $i$. Now let $s_{p_i}=s^{m/p_i^{\alpha_i}}$. Note that $s_{p_i}$ generates the Sylow $p_i$-subgroup of $\langle s\rangle$. Now let $s_{\sigma_i}=\prod_{p_j\in\sigma_i} s_{p_j}$.
\end{proof}

\begin{remark}
  Note that the constructed generating set $\{S_{\sigma_1},\dots, S_{\sigma_k}\}$ of $G$ has at most $n^3$ elements.
\end{remark}

\subsection{$\sigma$-properties}

Let $\sigma^1$ and $\sigma^2$ be partitions of the same set. Then $\sigma^1\cap\sigma^2=\{\sigma_i^1\cap\sigma_j^2\mid\sigma_i^1\in\sigma^1, \sigma_j^2\in\sigma^2\}$ is  also a partition of this set.
We say that $\sigma^1\leq \sigma^2$ if for every $\sigma_i^1\in\sigma^1$ there is $\sigma_j^2\in\sigma^2$ with $\sigma_i^1\subseteq\sigma^2_j$.

\begin{thm}\label{thm4}
Let $\sigma=\{\sigma_1,\dots, \sigma_k\}$ be a partition of $\pi(G)$.  Assume that $G=\langle S_{\sigma_1},\dots, S_{\sigma_k}\rangle$ where every element of $S_{\sigma_i}$ is a $\sigma_i$-element for every $\sigma_i\in\sigma$.  Then $G$ is a $\sigma$-nilpotent group iff   $\langle S_{\sigma_i}\rangle$ is a $\sigma_i$-group and $[S_{\sigma_i}, S_{\sigma_j}]=1$ for every $\sigma_i,\sigma_j\in\sigma$ with $\sigma_i\neq\sigma_j$.
\end{thm}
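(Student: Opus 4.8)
The plan is to prove both implications directly from the definition of $\sigma$-nilpotency, i.e. the existence of a normal Hall $\sigma_i$-subgroup for each $\sigma_i\in\sigma$.

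First I would prove the easy direction: assume $\langle S_{\sigma_i}\rangle$ is a $\sigma_i$-group for every $i$ and $[S_{\sigma_i},S_{\sigma_j}]=1$ whenever $\sigma_i\neq\sigma_j$. The commutator conditions imply that $\langle S_{\sigma_i}\rangle$ centralizes $\langle S_{\sigma_j}\rangle$ for $i\neq j$, so each $\langle S_{\sigma_i}\rangle$ is normalized by all the other generating subsets, hence normal in $G=\langle S_{\sigma_1},\dots,S_{\sigma_k}\rangle$. Since $G$ is then the product of the pairwise commuting normal $\sigma_i$-subgroups $\langle S_{\sigma_i}\rangle$, and the $\sigma_i$ are pairwise disjoint sets of primes, we get $G=\langle S_{\sigma_1}\rangle\times\cdots\times\langle S_{\sigma_k}\rangle$ (the orders are pairwise coprime, so the intersection of each factor with the product of the others is trivial). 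Thus $\langle S_{\sigma_i}\rangle$ is a normal Hall $\sigma_i$-subgroup of $G$, and $G$ is $\sigma$-nilpotent.

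For the converse, suppose $G$ is $\sigma$-nilpotent, so $G$ has a normal Hall $\sigma_i$-subgroup $G_{\sigma_i}$ for each $i$; again by coprimality $G=G_{\sigma_1}\times\cdots\times G_{\sigma_k}$. I would argue that every $\sigma_i$-element of $G$ lies in $G_{\sigma_i}$: writing such an element according to the direct decomposition, each component is a $\sigma_i$-element of $G_{\sigma_j}$, but $G_{\sigma_j}$ is a $\sigma_j$-group with $\sigma_j\cap\sigma_i=\varnothing$ for $j\neq i$, forcing all components outside the $i$-th to be trivial. Hence $S_{\sigma_i}\subseteq G_{\sigma_i}$, so $\langle S_{\sigma_i}\rangle\leq G_{\sigma_i}$ is a $\sigma_i$-group; and for $i\neq j$, $\langle S_{\sigma_i}\rangle\leq G_{\sigma_i}$ and $\langle S_{\sigma_j}\rangle\leq G_{\sigma_j}$ centralize each other because $G_{\sigma_i}$ and $G_{\sigma_j}$ are different direct factors, giving $[S_{\sigma_i},S_{\sigma_j}]=1$.

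I expect the only real point requiring care to be the passage from "normal Hall subgroups for each block" to the internal direct product decomposition $G=G_{\sigma_1}\times\cdots\times G_{\sigma_k}$, and the symmetric claim that $S_{\sigma_i}\subseteq G_{\sigma_i}$; both rest on the pairwise coprimality of the orders $|G_{\sigma_i}|$ together with normality, which is a standard fact (essentially that $\sigma$-nilpotent groups are exactly direct products of their Hall $\sigma_i$-subgroups, cf.\ the basic properties in \cite[Proposition 2.3]{skiba2015sigma}). Everything else is a routine manipulation of generators and commutators, so the proof is short once this decomposition is in hand.
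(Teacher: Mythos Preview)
Your proof is correct and follows essentially the same approach as the paper: for the forward direction you show each $\langle S_{\sigma_i}\rangle$ is a normal Hall $\sigma_i$-subgroup via the commutator condition, and for the converse you use that in a $\sigma$-nilpotent group the $\sigma_i$-elements lie in the normal Hall $\sigma_i$-subgroup and distinct Hall factors centralize each other. The paper's proof is slightly terser (it does not spell out the direct product decomposition explicitly), but the underlying argument is identical.
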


\begin{proof}
  Assume that   $\langle S_{\sigma_i}\rangle$ is a $\sigma_i$-group and $[S_{\sigma_i}, S_{\sigma_j}]=1$ for every $\sigma_i,\sigma_j\in\sigma$ with $\sigma_i\neq\sigma_j$. Then $\langle S_{\sigma_i}\rangle$ is a     normal  $\sigma_i$-subgroup of $G$. Note that $G=\prod_{\sigma_i\in\sigma}\langle S_{\sigma_i}\rangle$. Hence $\langle S_{\sigma_i}\rangle$ is a normal Hall $\sigma_i$-subgroup of $G$. Thus $G$ is $\sigma$-nilpotent.

  Assume that $G$ is $\sigma$-nilpotent. Then all $\sigma_i$-elements of $G$ generates a normal Hall $\sigma_i$-subgroup of $G$ and every $\sigma_i$-element of $G$ commute with every  $\sigma_j$-element of $G$ for $i\neq j$. Thus  $\langle S_{\sigma_i}\rangle$ is a $\sigma_i$-group and $[S_{\sigma_i}, S_{\sigma_j}]=1$ for every $\sigma_i,\sigma_j\in\sigma$ with $i\neq j$.
\end{proof}

\begin{lem}[{\cite[Lemma 2.6]{skiba2015sigma}}]\label{lem1}
  Let $H$ be a $\sigma$-subnormal subgroup of a group $G$ and $N\trianglelefteq G$.

  \begin{enumerate}
 \item If $H\leq K\leq G$, then $H$ is $\sigma$-subnormal in  $K$.

\item  $HN/N$ is  $\sigma$-subnormal in  $G/N$

 \item If $A/N$ is  $\sigma$-subnormal in $G/N$, then $A$ is a $\sigma$-subnormal in $G$.

  \end{enumerate}
\end{lem}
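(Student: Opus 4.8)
The plan is to take, in each of the three cases, a chain
$H=H_0\leq H_1\leq\dots\leq H_n=G$ witnessing the hypothesis (so for every $i$ either $H_{i-1}\trianglelefteq H_i$ or $H_i/(H_{i-1})_{H_i}$ is a $\sigma_j$-group for some $\sigma_j\in\sigma$), perform the evident operation on it — intersect with $K$ for (1), pass to images modulo $N$ for (2), pull back along the subgroup correspondence for (3) — and then verify that the defining alternative survives link by link. The only non-formal input needed is that the class of $\sigma_j$-groups is closed under subgroups and epimorphic images; normality of a link is automatically preserved by intersecting with a subgroup and by the canonical epimorphism, so in each case the work is entirely in the ``core'' alternative.

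For (1), set $K_i=H_i\cap K$. Since $H\leq K$ we have $K_0=H$ and $K_n=K$, and $K_{i-1}\leq K_i$. If $H_{i-1}\trianglelefteq H_i$, then $K_{i-1}=H_{i-1}\cap K_i\trianglelefteq K_i$. Otherwise put $D=(H_{i-1})_{H_i}$, so $H_i/D$ is a $\sigma_j$-group. As $D\trianglelefteq H_i$ we get $D\cap K_i\trianglelefteq K_i$, and $D\cap K_i=D\cap K\leq H_{i-1}\cap K=K_{i-1}$ because $D\leq H_{i-1}$; hence $D\cap K_i\leq (K_{i-1})_{K_i}$. Therefore $K_i/(K_{i-1})_{K_i}$ is an epimorphic image of $K_i/(D\cap K_i)\cong K_iD/D\leq H_i/D$ and so is a $\sigma_j$-group. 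Thus $H=K_0\leq\dots\leq K_n=K$ witnesses that $H$ is $\sigma$-subnormal in $K$.

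For (2), keep the same chain and write $\overline{X}=XN/N$. Then $\overline{H_0}=HN/N$, $\overline{H_n}=G/N$ and $\overline{H_{i-1}}\leq\overline{H_i}$; normal links remain normal under the canonical epimorphism. In the core alternative, with $D=(H_{i-1})_{H_i}\trianglelefteq H_i$ we have $\overline{D}\trianglelefteq\overline{H_i}$ and $\overline{D}\leq\overline{H_{i-1}}$, hence $\overline{D}\leq(\overline{H_{i-1}})_{\overline{H_i}}$. Consequently $\overline{H_i}/(\overline{H_{i-1}})_{\overline{H_i}}$ is an epimorphic image of $\overline{H_i}/\overline{D}\cong H_iN/DN\cong H_i/(H_i\cap DN)$, which in turn is an epimorphic image of the $\sigma_j$-group $H_i/D$ since $D\leq H_i\cap DN$. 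So $HN/N$ is $\sigma$-subnormal in $G/N$.

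For (3), use the correspondence theorem: a chain from $A/N$ to $G/N$ in $G/N$ is of the form $A/N=H_0/N\leq\dots\leq H_n/N=G/N$ with $N\leq A=H_0\leq\dots\leq H_n=G$. Here the behaviour of cores is exact, $(H_{i-1}/N)_{H_i/N}=(H_{i-1})_{H_i}/N$, so $(H_i/N)/(H_{i-1}/N)_{H_i/N}\cong H_i/(H_{i-1})_{H_i}$ and a normal link in $G/N$ is a normal link in $G$; thus the lifted chain $H_0\leq\dots\leq H_n$ witnesses that $A$ is $\sigma$-subnormal in $G$. The one point that requires care — and the reason (1) and (2) are slightly fussier than (3) — is precisely that cores do \emph{not} transport exactly under intersection with $K$ or under $G\to G/N$, so one cannot literally carry the witnessing quotient along; the remedy is the closure of the class of $\sigma_j$-groups under subgroups and quotients, which lets one bound the relevant quotient above or below by $H_i/D$.
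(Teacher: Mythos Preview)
Your proof is correct in all three parts. The paper itself does not prove this lemma: it is quoted verbatim from \cite[Lemma 2.6]{skiba2015sigma} without argument, so there is no in-paper proof to compare against. Your chain-by-chain verification (intersect with $K$, push forward along $G\to G/N$, pull back via the correspondence theorem) is the standard one, and your handling of the core alternative --- bounding the new core below by the image or intersection of $D=(H_{i-1})_{H_i}$ and then using closure of $\sigma_j$-groups under subgroups and quotients --- is exactly what is needed and is carried out without gaps.
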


The following 4 lemmas contain an important information about   $\sigma$-$p$-permutability.

\begin{lem}[{\cite[Lemma 1]{murashka2018generalization}}]\label{l1} Let  $N$ be a normal subgroup of a group $G$.
\begin{enumerate}
\item   If $H$ is a $\sigma$-$p$-permutable subgroup of   $G$, then $HN/N$ is a $\sigma$-$p$-permutable subgroup of   $G/N$.
\item If   $H/N$ is a $\sigma$-$p$-permutable subgroup of   $G/N$, then $H$ is a $\sigma$-$p$-permutable subgroup of   $G$.
\end{enumerate}
\end{lem}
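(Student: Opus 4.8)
The plan is to derive both parts from the elementary behaviour of $\mathfrak{G}_{\sigma_i}$-projectors (for the class $\mathfrak{G}_{\sigma_i}$ of all $\sigma_i$-groups) under the canonical epimorphism $\pi_N\colon G\to G/N$. The facts I would invoke from the basic theory of projectors (see \cite[III]{Doerk1992}) are: (a) the image of a $\mathfrak{G}_{\sigma_i}$-projector under an epimorphism is again a $\mathfrak{G}_{\sigma_i}$-projector; (b) projectors are persistent, i.e. if $N\trianglelefteq G$, $K/N$ is a $\mathfrak{G}_{\sigma_i}$-projector of $G/N$ and $P$ is a $\mathfrak{G}_{\sigma_i}$-projector of $K$, then $P$ is a $\mathfrak{G}_{\sigma_i}$-projector of $G$; and (c) a $\sigma_i$-group is its own (unique) $\mathfrak{G}_{\sigma_i}$-projector. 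I would also use repeatedly that conjugation commutes with $\pi_N$, so $(UN/N)^{xN}=U^xN/N$ for every $U\le G$ and $x\in G$, and the independence theorem \cite[Theorem 1]{murashka2018generalization} quoted above.

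For (1), fix $\sigma_i\in\sigma$ and, using the definition of $\sigma$-$p$-permutability, choose a $\mathfrak{G}_{\sigma_i}$-projector $H_i$ of $G$ with $HH_i^x=H_i^xH$ for all $x\in G$. By (a), $\bar H_i:=H_iN/N$ is a $\mathfrak{G}_{\sigma_i}$-projector of $G/N$, and for $\bar x=xN$ one has $\bar H_i^{\bar x}=H_i^xN/N$, whence
\[(HN/N)\,\bar H_i^{\bar x}=HH_i^xN/N=H_i^xHN/N=\bar H_i^{\bar x}\,(HN/N).\]
Thus $HN/N$ permutes with every conjugate of the projector $\bar H_i$ of $G/N$, which is exactly the defining condition for $HN/N$ to be $\sigma$-$p$-permutable in $G/N$; note that here only the ``there exists a projector'' form of the definition is needed, so the independence theorem is not required for (1).

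For (2), assume $N\le H$ and $H/N$ is $\sigma$-$p$-permutable in $G/N$. Fix $\sigma_i\in\sigma$ and take an arbitrary $\mathfrak{G}_{\sigma_i}$-projector $H_i$ of $G$; by (a), $H_iN/N$ is a $\mathfrak{G}_{\sigma_i}$-projector of $G/N$, so the independence theorem applied inside $G/N$ gives $(H/N)(H_iN/N)^{\bar x}=(H_iN/N)^{\bar x}(H/N)$ for all $\bar x\in G/N$, i.e. $HH_i^xN=H_i^xHN$ as subsets of $G$. Since $N\trianglelefteq G$ and $N\le H$, one strips the extra factor: $HH_i^xN=H(NH_i^x)=(HN)H_i^x=HH_i^x$ and symmetrically $H_i^xHN=H_i^xH$, hence $HH_i^x=H_i^xH$ for all $x\in G$; as $H_i$ was an arbitrary $\mathfrak{G}_{\sigma_i}$-projector of $G$, this shows $H$ is $\sigma$-$p$-permutable in $G$. (If one prefers to avoid the independence theorem, pick instead a \emph{witnessing} $\mathfrak{G}_{\sigma_i}$-projector $K_i/N$ of $G/N$, let $H_i$ be a $\mathfrak{G}_{\sigma_i}$-projector of $K_i$ — so $H_i$ is a $\mathfrak{G}_{\sigma_i}$-projector of $G$ by (b) — observe $H_iN/N$ is a $\mathfrak{G}_{\sigma_i}$-projector of the $\sigma_i$-group $K_i/N$, hence $H_iN/N=K_i/N$ by (c), and run the same computation.)

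The only genuinely delicate point is the set-theoretic bookkeeping of the subgroup products through the quotient in (2): one must verify that the equality $(H/N)(H_iN/N)^{\bar x}=(H_iN/N)^{\bar x}(H/N)$ in $G/N$ unwinds precisely to $HH_i^xN=H_i^xHN$ in $G$, and then use normality of $N$ and $N\le H$ to cancel the trailing $N$ on both sides. Everything else is a mechanical consequence of the definitions and of the projector facts (a)--(c).
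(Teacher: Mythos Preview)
The paper does not prove this lemma; it is quoted verbatim from \cite[Lemma 1]{murashka2018generalization} and used as a black box. So there is nothing to compare against here.

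Your argument is correct. Part (1) is a direct computation once one knows that images of $\mathfrak{G}_{\sigma_i}$-projectors are $\mathfrak{G}_{\sigma_i}$-projectors, and your chain $(HN/N)(H_i^xN/N)=HH_i^xN/N=H_i^xHN/N=(H_i^xN/N)(HN/N)$ is exactly right. For part (2) the cancellation $HH_i^xN=H(NH_i^x)=(HN)H_i^x=HH_i^x$ and $H_i^xHN=H_i^x(HN)=H_i^xH$ is the point, and you have it. Your alternative route via persistence (b) and the observation $H_iN=K_i$ is also sound and has the virtue of not invoking the independence theorem, so the lemma does not become circular with respect to \cite[Theorem 1]{murashka2018generalization}; this is worth keeping, since in the original reference the lemma presumably precedes that theorem.
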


\begin{lem}[{\cite[Theorem 2]{murashka2018generalization}}]\label{t1} Let $H$ be a $\sigma$-$p$-permutable subgroup of a group   $G$. Then $H^G/H_G$ is $\sigma$-nilpotent.
\end{lem}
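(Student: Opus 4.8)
The plan is to adapt to projectors the classical argument showing that an $S$-permutable subgroup has nilpotent normal closure modulo its core. First I would reduce to the case $H_G=1$: by Lemma \ref{l1}(1) the subgroup $HH_G/H_G$ is $\sigma$-$p$-permutable in $G/H_G$, and since its normal closure in $G/H_G$ equals $H^G/H_G$ while its core in $G/H_G$ is trivial, it suffices to prove that $N:=H^G$ is $\sigma$-nilpotent whenever $H_G=1$. I would then use that $N$ is $\sigma$-nilpotent exactly when it has a normal Hall $\sigma_i'$-subgroup for every $\sigma_i\in\sigma$ (equivalently, $\mathrm{O}^{\sigma_i}(N)$ is a $\sigma_i'$-group for every $i$): from normal Hall $\sigma_i'$-subgroups $D_i$ one recovers the normal Hall $\sigma_i$-subgroup of $N$ as $\bigcap_{j\neq i}D_j$. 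So one fixes $\sigma_i\in\sigma$ and must show that $\mathrm{O}^{\sigma_i}(N)$ is a $\sigma_i'$-group.

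Fix, by $\sigma$-$p$-permutability, a $\mathfrak{G}_{\sigma_i}$-projector $H_i$ of $G$ with $HH_i^x=H_i^xH$ for all $x\in G$; by \cite[Theorem 1]{murashka2018generalization} the same then holds for every $\mathfrak{G}_{\sigma_i}$-projector. The first observation is that $H$ permutes with the normal subgroup $D:=H_i^{G}=\langle H_i^x\mid x\in G\rangle$: every element of $D$ is a product of elements each lying in some $H_i^x$, and pushing $h\in H$ through such a product one factor at a time via $HH_i^x=H_i^xH$ yields $HD=DH$, so $HD$ is a subgroup. Since $\mathfrak{G}_{\sigma_i}$ is a saturated formation, $H_i$ is a $\mathfrak{G}_{\sigma_i}$-covering subgroup, hence covers every $\mathfrak{G}_{\sigma_i}$-central chief factor of $G$ and avoids every $\mathfrak{G}_{\sigma_i}$-eccentric one. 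Next one shows that $H$, being $\sigma$-$p$-permutable, is $\sigma$-subnormal in $G$ (by a Kegel-type argument applied to the subgroup $HH_i^{G}$, or see \cite{murashka2018generalization}), so that by Lemma \ref{lem1} $\sigma$-subnormality passes to $HL/L$ in $G/L$ and to $H\cap L$ in $L$ for every $L\trianglelefteq G$. Running an induction along a chief series of $G$ refining $1\trianglelefteq N\trianglelefteq ND\trianglelefteq G$, the crucial point is to show that each $\sigma_i$-chief factor $L/K$ of $G$ lying inside $N$ is $\mathfrak{G}_{\sigma_i}$-central and is covered by $H_i$: this is exactly where $HH_i^x=H_i^xH$ enters, forcing the projector not to avoid such a factor while the normal closure $N=H^G$ reaches into it, which would contradict the maximality of $H_i^x$ among $\sigma_i$-subgroups of the relevant section. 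Consequently $N$ has no $\mathfrak{G}_{\sigma_i}$-eccentric $\sigma_i$-chief factor and no non-$\sigma_i$-central $\sigma_i$-chief factor, which forces $\mathrm{O}^{\sigma_i}(N)$ to be a $\sigma_i'$-group; as $\sigma_i$ was arbitrary, $N=H^G$ is $\sigma$-nilpotent.

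The technical heart, and the expected main obstacle, is this chief-series step. In the $S$-permutable case one exploits that Sylow subgroups are Hall subgroups: if $H$ is a $\sigma_i'$-group permuting with a Hall $\sigma_i$-subgroup $P$, then $H$ is a subnormal Hall subgroup of $HP$, hence normal there, and the induction is immediate. A $\mathfrak{G}_{\sigma_i}$-projector need not be a Hall $\sigma_i$-subgroup — indeed Hall $\sigma_i$-subgroups need not exist — so ``subnormal Hall $\Rightarrow$ normal'' is not available, and the required normality must instead be extracted from the covering/avoidance behaviour of covering subgroups of the saturated formation $\mathfrak{G}_{\sigma_i}$ together with the $\sigma$-subnormality of $H$, which makes the bookkeeping over the chief series delicate. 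In any case the assertion is precisely \cite[Theorem 2]{murashka2018generalization}, so within the present paper its proof may simply be quoted.
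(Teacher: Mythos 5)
The paper does not prove this statement at all: it is imported verbatim as \cite[Theorem~2]{murashka2018generalization}, and your closing remark --- that within the present paper the assertion may simply be quoted --- is exactly what the author does. So as far as this paper is concerned, your fallback matches the paper's treatment.

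Your independent sketch, however, should not be mistaken for a proof. Two steps are genuinely open. First, you invoke ``$\sigma$-$p$-permutable $\Rightarrow$ $\sigma$-subnormal'' with only a gesture at a Kegel-type argument; note that this implication is most naturally a \emph{consequence} of the very lemma you are proving (if $H^G/H_G$ is $\sigma$-nilpotent, then $H/H_G$ is $\sigma$-subnormal in $H^G/H_G$ and hence in $G/H_G$), so using it as an ingredient risks circularity unless you establish it independently, which you do not. Second, the ``technical heart'' --- showing that every $\sigma_i$-chief factor of $G$ inside $N=H^G$ is covered by the $\mathfrak{G}_{\sigma_i}$-projector, and that this forces $\mathrm{O}^{\sigma_i}(N)$ to be a $\sigma_i'$-group --- is exactly the point where the classical $S$-permutability argument breaks (no ``subnormal Hall $\Rightarrow$ normal'' shortcut, since $\mathfrak{G}_{\sigma_i}$-projectors need not be Hall subgroups), and you acknowledge this without supplying the covering/avoidance bookkeeping; the claimed contradiction with ``maximality of $H_i^x$'' is asserted, not derived. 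The correct sound pieces of your sketch are the reduction to $H_G=1$ via Lemma~\ref{l1}, the recovery of normal Hall $\sigma_i$-subgroups from normal Hall $\sigma_i'$-subgroups, and the observation $HD=DH$ for $D=H_i^G$; but the lemma itself, if not quoted from \cite[Theorem~2]{murashka2018generalization}, would still need the missing argument.
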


\begin{lem}[{\cite[Theorem 3(2)]{murashka2018generalization}}]\label{prop1} Let  $H$ be a $\sigma$-nilpotent subgroup of a group   $G$. Then $H$ is   $\sigma$-$p$-permutable in $G$    if and only if every Hall $\pi_i$-subgroup  of $H$ is $\sigma$-$p$-permutable in $G$  for all $\pi_i\in\sigma$.
\end{lem}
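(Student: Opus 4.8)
The plan is to treat the two implications separately; the reverse one is essentially bookkeeping, while the forward one carries all the weight. Since $H$ is $\sigma$-nilpotent it is the internal direct product $H=H_1\times\cdots\times H_k$ of its normal Hall $\sigma_i$-subgroups, and as the $H_i$ are normal in $H$ with pairwise coprime orders they centralise one another, so every partial product $H_1\cdots H_m$ is again a subgroup. Assume every $H_i$ is $\sigma$-$p$-permutable in $G$; by \cite[Theorem 1]{murashka2018generalization}, together with the fact that a conjugate of a $\mathfrak{G}_{\sigma_j}$-projector is again one, this says exactly that each $H_i$ permutes with every $\mathfrak{G}_{\sigma_j}$-projector $L$ of $G$, for all $\sigma_j\in\sigma$. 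Fix such an $L$: associativity of the complex product gives $ABL=A(BL)=A(LB)=(AL)B=(LA)B=LAB$ whenever $AL=LA$, $BL=LB$ and $AB=BA$, so in that situation $AB$ permutes with $L$; feeding this inductively along $H_1\le H_1H_2\le\cdots\le H$ yields $HL=LH$. Hence $H$ permutes with every $\mathfrak{G}_{\sigma_j}$-projector of $G$ and so is $\sigma$-$p$-permutable in $G$ by \cite[Theorem 1]{murashka2018generalization}.

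For the \emph{only if} direction I would induct on $|G|$, fixing $i$ and aiming at $H_i$. If the core $(H_i)_G$ is non-trivial, put $N=(H_i)_G$: by Lemma \ref{l1} the $\sigma$-nilpotent subgroup $H/N$ is $\sigma$-$p$-permutable in $G/N$, with Hall $\sigma_i$-subgroup $H_i/N$, the inductive hypothesis makes $H_i/N$ $\sigma$-$p$-permutable in $G/N$, and the second part of Lemma \ref{l1} lifts this to $H_i$ in $G$. So assume $(H_i)_G=1$. Using that each Hall subgroup of the normal subgroup $H_G$ is characteristic in $H_G$, hence normal in $G$, one checks that $H_G$ is then a $\sigma_i'$-group; one would like to reduce further to $H_G=1$, but passing to $G/H_G$ and applying the inductive hypothesis only returns (through Lemma \ref{l1}) the $\sigma$-$p$-permutability of the possibly larger subgroup $H_iH_G$, not of $H_i$ itself, and bridging that gap is the delicate point — handled by the normalizer-type characterisation of $\sigma$-$p$-permutability established in \cite{murashka2018generalization}, or by a separate argument that $\mathrm{O}^{\pi}(K)$ inherits $\sigma$-$p$-permutability from $K$.

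Once we are reduced to $H_G=1$, Lemma \ref{t1} gives that $H^G$ is $\sigma$-nilpotent; being normal in $G$, its Hall $\sigma_l$-subgroups are characteristic in $H^G$ and hence normal in $G$, so $H_i\le\mathrm{O}_{\sigma_i}(G)$ and $H_{i'}:=\prod_{l\ne i}H_l\le\mathrm{O}_{\sigma_i'}(G)$. Take any $\mathfrak{G}_{\sigma_j}$-projector $L$ of $G$; since $H$ is $\sigma$-$p$-permutable, $U:=HL$ is a subgroup. If $\sigma_j=\sigma_i$, then $L$ is a maximal $\sigma_i$-subgroup of $G$ (take the trivial normal subgroup in the definition of a projector), so $L\supseteq\mathrm{O}_{\sigma_i}(G)\supseteq H_i$ and $H_iL=L=LH_i$. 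If $\sigma_j\ne\sigma_i$, then $L$ is a $\sigma_i'$-group; using $H_{i'}\le\mathrm{O}_{\sigma_i'}(G)$ and $[\mathrm{O}_{\sigma_i}(G),\mathrm{O}_{\sigma_i'}(G)]\le\mathrm{O}_{\sigma_i}(G)\cap\mathrm{O}_{\sigma_i'}(G)=1$ one identifies $U\cap\mathrm{O}_{\sigma_i}(G)$ with $H_i$, and since $L$ normalises the normal subgroup $\mathrm{O}_{\sigma_i}(G)$ of $G$ while conjugating $H_i\le U$ inside $U$, every $L$-conjugate of $H_i$ lies in $U\cap\mathrm{O}_{\sigma_i}(G)=H_i$; hence $L$ normalises $H_i$ and $H_iL=LH_i$. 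In either case $H_i$ permutes with every $\mathfrak{G}_{\sigma_j}$-projector of $G$, so $H_i$ is $\sigma$-$p$-permutable in $G$ by \cite[Theorem 1]{murashka2018generalization}.

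The hard part will be the forward direction, and inside it the descent step above: the quotient tools of Lemma \ref{l1} only deliver $\sigma$-$p$-permutability of $H_iN$ for $N\trianglelefteq G$, so closing the case where $H$ has non-trivial core in $G$ genuinely needs structural input about $\sigma$-$p$-permutable subgroups beyond Lemmas \ref{l1} and \ref{t1}.
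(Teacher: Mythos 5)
First, note that the paper does not prove this statement at all: it is quoted verbatim from \cite[Theorem 3(2)]{murashka2018generalization}, so there is no internal proof to compare with; what matters is whether your argument stands on its own. Your \emph{if} direction is fine: the elementwise commuting of the Hall subgroups of the $\sigma$-nilpotent group $H$ plus the associativity computation $ABL=LAB$ and the projector-independence result \cite[Theorem 1]{murashka2018generalization} do give $HL=LH$ for every projector $L$. In the \emph{only if} direction, the reduction when $(H_i)_G\neq 1$ is correct, and your terminal case $H_G=1$ also works (the cleanest justification of $U\cap\mathrm{O}_{\sigma_i}(G)=H_i$ is an order count: the $\sigma_i$-part of $|U|=|H||L|/|H\cap L|$ equals $|H_i|$ because $L$ and $H\cap L$ are $\sigma_i'$-groups, rather than the commutator relation you invoke).

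The genuine gap is exactly where you flag it: the case $(H_i)_G=1\neq H_G$. Passing to $G/H_G$ and using Lemma \ref{l1} only returns $\sigma$-$p$-permutability of $K:=H_iH_G$, and the missing bridge "$\mathrm{O}^{\sigma_i'}(K)=\mathrm{O}_{\sigma_i}(K)=H_i$ inherits $\sigma$-$p$-permutability from $K$" is not an auxiliary fact — since $H_i$ is precisely the Hall $\sigma_i$-subgroup of the $\sigma$-nilpotent subgroup $K$, this bridge is an instance of the very implication being proved, for the same ambient group $G$, so your induction on $|G|$ does not reach it and invoking it is circular. Deferring instead to "the normalizer-type characterisation established in \cite{murashka2018generalization}" is not available either: that is the source the lemma is being quoted from, and within this paper the corresponding criterion (Theorem \ref{thm5}) is itself proved using the present lemma, so it cannot be used here. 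Note also that Lemma \ref{l2} only yields $\mathrm{O}^{\sigma_i}(G)\leq N_G(H_i)$, which controls permutation with $\sigma_j$-projectors for $\sigma_j\neq\sigma_i$ but says nothing about permutation with the $\sigma_i$-projectors themselves, and your $H_G=1$ argument for that case relied on $H_i\leq\mathrm{O}_{\sigma_i}(G)$, which can fail when $H_G\neq 1$ (the preimage of $\mathrm{O}_{\sigma_i}(H^G/H_G)$ need not have a normal Hall $\sigma_i$-subgroup). So as written the forward direction is incomplete; you need either a genuinely different reduction or a proved inheritance statement to close this case.
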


\begin{lem}[{\cite[Lemma 5]{murashka2018generalization}}]\label{l2} Let $H$ be a $\sigma$-$p$-permutable subgroup of a group $G$. Then $\mathrm{O}^{\pi_i}(G)\leq N_G(\mathrm{O}_{\pi_i}(H))$ for every $\pi_i\in\sigma$.        \end{lem}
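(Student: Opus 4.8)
The plan is to fix $\pi_i\in\sigma$, to induct on $|G|$, and to reduce to the case where $H$ itself is a core-free $\sigma$-$p$-permutable $\pi_i$-group lying inside $\mathrm{O}_{\pi_i}(G)$, where a short direct argument finishes.

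For the reduction I would first treat $H_G\neq 1$. By Lemma~\ref{l1} the subgroup $H/H_G$ is $\sigma$-$p$-permutable in $G/H_G$, so by the inductive hypothesis $\mathrm{O}^{\pi_i}(G)H_G/H_G=\mathrm{O}^{\pi_i}(G/H_G)$ normalizes $\mathrm{O}_{\pi_i}(H/H_G)$. Let $T\leq H$ be the full preimage of $\mathrm{O}_{\pi_i}(H/H_G)$; then $T\trianglelefteq H$, $\mathrm{O}_{\pi_i}(H)\leq T$, and $\mathrm{O}_{\pi_i}(T)$, being characteristic in $T\trianglelefteq H$ and a $\pi_i$-group, equals $\mathrm{O}_{\pi_i}(H)$. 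Passing to preimages, $\mathrm{O}^{\pi_i}(G)H_G$ normalizes $T$ and hence its characteristic subgroup $\mathrm{O}_{\pi_i}(T)=\mathrm{O}_{\pi_i}(H)$, which settles this case. So I may assume $H_G=1$. Then $H\leq H^G$, which is $\sigma$-nilpotent by Lemma~\ref{t1}; hence $H$ is $\sigma$-nilpotent and $D:=\mathrm{O}_{\pi_i}(H)$ is its (direct-factor) Hall $\pi_i$-subgroup, so by Lemma~\ref{prop1} $D$ is $\sigma$-$p$-permutable in $G$, and $D_G\leq H_G=1$. Since $\mathrm{O}_{\pi_i}(D)=D$, it suffices to prove $\mathrm{O}^{\pi_i}(G)\leq N_G(D)$. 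Applying Lemma~\ref{t1} to $D$: the $\sigma$-nilpotent group $D^G$ contains the $\pi_i$-group $D$, so $D\leq\mathrm{O}_{\pi_i}(D^G)\trianglelefteq G$, which forces $D^G\leq\mathrm{O}_{\pi_i}(D^G)$, so $D^G$ is a $\pi_i$-group and $D\leq\mathrm{O}_{\pi_i}(G)$. Renaming, it remains to handle the following situation: $H$ is $\sigma$-$p$-permutable in $G$ and $H\leq P:=\mathrm{O}_{\pi_i}(G)$, and we must show $\mathrm{O}^{\pi_i}(G)\leq N_G(H)$.

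In that situation, fix any $\pi_j\in\sigma$ with $\pi_j\neq\pi_i$ and a $\mathfrak{G}_{\pi_j}$-projector $H_j$ of $G$. For each $x\in G$ the set $HH_j^x=H_j^xH$ is a subgroup; it is contained in $PH_j^x$, which is an internal semidirect product $P\rtimes H_j^x$ since $P\trianglelefteq G$ and $|P|,|H_j^x|$ are coprime; it contains $H_j^x$; and Dedekind's law gives $HH_j^x\cap P=H(H_j^x\cap P)=H$. As $P\cap(HH_j^x)\trianglelefteq HH_j^x$, this means $H\trianglelefteq HH_j^x$, so $H_j^x\leq N_G(H)$. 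Hence the normal subgroup $R:=\langle H_j^G\mid \pi_j\in\sigma,\ \pi_j\neq\pi_i\rangle$ of $G$ is contained in $N_G(H)$.

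It then remains to see $\mathrm{O}^{\pi_i}(G)\leq R$. For each $\pi_j\neq\pi_i$ the image $H_jR/R$ is a $\mathfrak{G}_{\pi_j}$-projector of $G/R$, and it is trivial because $H_j\leq H_j^G\leq R$; since a $\mathfrak{G}_{\pi_j}$-projector of a group is trivial exactly when the group is a $\pi_j'$-group, $G/R$ is a $\pi_j'$-group, so $R$ contains every $\pi_j$-element of $G$. As $\sigma$ partitions $\pi(G)$, taking the join over all $\pi_j\neq\pi_i$ yields $\mathrm{O}^{\pi_i}(G)=\langle\, p\text{-elements of }G\mid p\in\pi(G)\setminus\pi_i\,\rangle\leq R\leq N_G(H)$, as required. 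The step I expect to be the main obstacle is the core reduction above: because $\mathrm{O}_{\pi_i}(H)$ need not map onto $\mathrm{O}_{\pi_i}(H/H_G)$ when $H_G$ carries both $\pi_i$- and $\pi_i'$-parts, one has to route the induction through the auxiliary normal subgroup $T\leq H$ and verify $\mathrm{O}_{\pi_i}(T)=\mathrm{O}_{\pi_i}(H)$; by comparison, the semidirect-product computation and the remark that a $\mathfrak{G}_{\pi_j}$-projector vanishes precisely on $\pi_j'$-groups are brief.
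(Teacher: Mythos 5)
Your proposal is correct as far as I can verify, but there is nothing in the paper to compare it with: the paper does not prove this lemma, it imports it as \cite[Lemma 5]{murashka2018generalization}, so any proof you give is necessarily a ``different route.'' Your argument stays inside the toolkit the paper quotes and all the key steps check out: in the case $H_G\neq 1$ the induction through $G/H_G$ together with the auxiliary preimage $T$ of $\mathrm{O}_{\pi_i}(H/H_G)$ and the identity $\mathrm{O}_{\pi_i}(T)=\mathrm{O}_{\pi_i}(H)$ is sound (it also uses $\mathrm{O}^{\pi_i}(G)H_G/H_G=\mathrm{O}^{\pi_i}(G/H_G)$, a fact the paper itself uses in Theorem \ref{thm5}); in the core-free case the passage to $D=\mathrm{O}_{\pi_i}(H)$ uses that subgroups of $\sigma$-nilpotent groups are $\sigma$-nilpotent (standard, but worth stating explicitly) and Lemmas \ref{t1} and \ref{prop1}, and the conclusion $D\leq\mathrm{O}_{\pi_i}(G)$ via the $\sigma$-nilpotency of $D^G$ is correct; the Dedekind computation $P\cap HH_j^x=H(P\cap H_j^x)=H$ indeed needs only $H\leq P=\mathrm{O}_{\pi_i}(G)$, $P\trianglelefteq G$ and coprimality, and gives $H_j^x\leq N_G(H)$; finally, for $R=\langle H_j^G\mid\pi_j\neq\pi_i\rangle$ you do not even need the general fact that projector images are projectors -- the defining property of $H_j$ applied to the single normal subgroup $R$ already says that $H_jR/R=1$ is maximal among $\pi_j$-subgroups of $G/R$, so $G/R$ is a $\pi_j'$-group by Cauchy, and $\mathrm{O}^{\pi_i}(G)\leq R\leq N_G(H)$ follows since $\mathrm{O}^{\pi_i}(G)$ is generated by the $p$-elements with $p\notin\pi_i$. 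The one caveat is logical rather than mathematical: Lemmas \ref{t1} and \ref{prop1} are quoted from the same source as the statement you are proving; inside the present paper they are black boxes, so your derivation is legitimate here, but if it were meant to replace the original proof of \cite[Lemma 5]{murashka2018generalization} one would have to check that Theorems 2 and 3(2) of that paper are not themselves proved using Lemma 5, or the argument would be circular. Your closing remark is also accurate: the core reduction through $T$ is the only genuinely delicate point, precisely because $\mathrm{O}_{\pi_i}(H)H_G/H_G$ may be strictly smaller than $\mathrm{O}_{\pi_i}(H/H_G)$.
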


Now we are ready to present a criterion for $\sigma$-$p$-permutability.

\begin{thm}\label{thm5}
  A subgroup $H$ of a group $G$ is $\sigma$-$p$-permutable in $G$ if and only if $H^G/H_G$ is $\sigma$-nilpotent
  and $O(\sigma_i)^{\mathrm{O}^{\sigma_i}(G)}=O(\sigma_i)$ for every $\sigma_i\in\sigma(H/H_G)$ where   $O(\sigma_i)/H_G=\mathrm{O}_{\sigma_i}(H/H_G)$.
\end{thm}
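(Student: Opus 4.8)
The plan is to prove both implications of Theorem~\ref{thm5}, in each case first reducing to the case $H_G=1$. By Lemma~\ref{l1}, $H$ is $\sigma$-$p$-permutable in $G$ if and only if $H/H_G$ is $\sigma$-$p$-permutable in $G/H_G$, so this reduction is legitimate once one checks that the two displayed conditions transfer correctly to $G/H_G$; this is routine bookkeeping, using $(H/H_G)^{G/H_G}=H^G/H_G$, $(H/H_G)_{G/H_G}=1$, $O(\sigma_i)/H_G=\mathrm{O}_{\sigma_i}(H/H_G)$, $\mathrm{O}^{\sigma_i}(G/H_G)=\mathrm{O}^{\sigma_i}(G)H_G/H_G$, and the fact that for a block $\sigma_i\in\sigma$ with $\sigma_i\cap\pi(H/H_G)=\emptyset$ one has $O(\sigma_i)=H_G\trianglelefteq G$, so the condition is vacuous there. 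Hence it suffices to prove: if $H_G=1$, then $H$ is $\sigma$-$p$-permutable in $G$ iff $H^G$ is $\sigma$-nilpotent and $\mathrm{O}^{\sigma_i}(G)\le N_G(\mathrm{O}_{\sigma_i}(H))$ for every $\sigma_i\in\sigma$.

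Necessity is then immediate from the quoted results: Lemma~\ref{t1} gives that $H^G=H^G/H_G$ is $\sigma$-nilpotent, and Lemma~\ref{l2}, applied with $\pi_i=\sigma_i$, gives $\mathrm{O}^{\sigma_i}(G)\le N_G(\mathrm{O}_{\sigma_i}(H))$ for every $\sigma_i\in\sigma$.

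For sufficiency assume $H_G=1$, $H^G$ is $\sigma$-nilpotent, and $\mathrm{O}^{\sigma_i}(G)\le N_G(\mathrm{O}_{\sigma_i}(H))$ for all $\sigma_i\in\sigma$. Since $H\le H^G$ and $\sigma$-nilpotency is subgroup-closed, $H$ is $\sigma$-nilpotent; moreover $\mathrm{O}_{\sigma_i}(H)\le\mathrm{O}_{\sigma_i}(H^G)$ (a $\sigma_i$-subgroup of the $\sigma$-nilpotent group $H^G$ lies in its normal Hall $\sigma_i$-subgroup), and $\mathrm{O}_{\sigma_i}(H^G)$ is characteristic in $H^G\trianglelefteq G$ and a $\sigma_i$-group, hence $\mathrm{O}_{\sigma_i}(H^G)\le\mathrm{O}_{\sigma_i}(G)$. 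Writing $P_i:=\mathrm{O}_{\sigma_i}(H)$ we thus have
\[
P_i\le\mathrm{O}_{\sigma_i}(G)\trianglelefteq G\quad\text{for every }\sigma_i\in\sigma .
\]
By Lemma~\ref{prop1} it is enough to show that each $P_i$ is $\sigma$-$p$-permutable in $G$ (this is trivial if $P_i=1$). Fix $\sigma_i\in\sigma$ with $P_i\neq1$ and, for each $\sigma_j\in\sigma$, pick a $\mathfrak{G}_{\sigma_j}$-projector $H_j$ of $G$ (which exists by \cite[III, Theorem 3.10]{Doerk1992}); every conjugate $H_j^x$ is again a $\mathfrak{G}_{\sigma_j}$-projector. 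If $\sigma_j=\sigma_i$: with $N=1$ in the definition, $H_j^x$ is maximal among $\sigma_i$-subgroups of $G$, so the $\sigma_i$-subgroup $H_j^x\mathrm{O}_{\sigma_i}(G)$ equals $H_j^x$, i.e. $\mathrm{O}_{\sigma_i}(G)\le H_j^x$; hence $P_i\le\mathrm{O}_{\sigma_i}(G)\le H_j^x$ and $P_iH_j^x=H_j^x=H_j^xP_i$ for all $x\in G$. If $\sigma_j\neq\sigma_i$: put $N=\mathrm{O}^{\sigma_i}(G)\trianglelefteq G$; then $G/N$ is a $\sigma_i$-group, so (as $\sigma_i\cap\sigma_j=\emptyset$) its only $\sigma_j$-subgroup is trivial, and since $H_jN/N$ is a $\sigma_j$-subgroup of $G/N$ we get $H_j\le N$, whence $H_j^x\le N^x=N=\mathrm{O}^{\sigma_i}(G)\le N_G(P_i)$; so $H_j^x$ normalizes $P_i$ and $P_iH_j^x=H_j^xP_i$. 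Thus $P_i$ is $\sigma$-$p$-permutable in $G$, and Lemma~\ref{prop1} yields that $H$ is $\sigma$-$p$-permutable in $G$.

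The only delicate points are the bookkeeping check that the two displayed conditions are preserved under the passage to $G/H_G$ (so both reductions are valid) and, in the sufficiency part, the location of the projectors: that a $\mathfrak{G}_{\sigma_i}$-projector is maximal among $\sigma_i$-subgroups of $G$ (hence contains $\mathrm{O}_{\sigma_i}(G)\ge P_i$) and that a $\mathfrak{G}_{\sigma_j}$-projector with $\sigma_j\neq\sigma_i$ is contained in $\mathrm{O}^{\sigma_i}(G)$. Everything else is a direct combination of Lemmas~\ref{l1}, \ref{t1}, \ref{prop1} and~\ref{l2} with two elementary facts: subgroups of $\sigma$-nilpotent groups are $\sigma$-nilpotent, and every normal $\sigma_i$-subgroup of $G$ lies in $\mathrm{O}_{\sigma_i}(G)$.
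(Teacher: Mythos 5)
Your proof is correct and follows essentially the same route as the paper: necessity via Lemmas~\ref{t1}, \ref{l1} and \ref{l2}, and sufficiency by locating each $\mathrm{O}_{\sigma_i}(H/H_G)$ inside $\mathrm{O}_{\sigma_i}(G/H_G)$ (hence inside every $\mathfrak{G}_{\sigma_i}$-projector) and using the normalization hypothesis to get permutability with the $\sigma_j$-projectors for $\sigma_j\neq\sigma_i$, finishing with Lemmas~\ref{prop1} and \ref{l1}. The only differences are presentational: you reduce to $H_G=1$ rather than working in $G/H_G$, and you spell out the projector containments that the paper asserts without proof.
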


\begin{proof}
  Assume that $H$ is $\sigma$-$p$-permutable in $G$. Then $H^G/H_G$ is a normal $\sigma$-nilpotent subgroup of $G/H_G$ by Lemma \ref{t1}.  Note that $H/H_G$ is $\sigma$-$p$-permutable in $G/H_G$ by Lemma \ref{l1}. Hence $$\mathrm{O}^{\sigma_i}(G/H_G)\leq N_{G/H_G}(O(\sigma_i)/H_G)$$ by Lemma \ref{l2}. Note that  $\mathrm{O}^{\sigma_i}(G)H_G/H_G=\mathrm{O}^{\sigma_i}(G/H_G)$. Hence $\mathrm{O}^{\sigma_i}(G)\leq N_{G}(O(\sigma_i))$. Thus $O(\sigma_i)^{\mathrm{O}^{\sigma_i}(G)}=O(\sigma_i)$ for every $\sigma_i\in\sigma(H/H_G)$.


  Assume that $H^G/H_G$ is $\sigma$-nilpotent and $O(\sigma_i)^{\mathrm{O}^{\sigma_i}(G)}=O(\sigma_i)$ for every $\sigma_i\in\sigma(H/H_G)$ where   $O(\sigma_i)/H_G=\mathrm{O}_{\sigma_i}(H/H_G)$. Note that $H/H_G$ is $\sigma$-nilpotent. Hence it is a direct product of its Hall subgroups $O(\sigma_i)/H_G=\mathrm{O}_{\sigma_i}(H/H_G)$ for all $\sigma_i\in\sigma(H/H_G)$. Therefore $O(\sigma_i)/H_G\leq \mathrm{O}_{\sigma_i}(H^G/H_G)\leq \mathrm{O}_{\sigma_i}(G/H_G)$ for all $\sigma_i\in\sigma(H/H_G)$. Thus $O(\sigma_i)/H_G$  lies in any $\sigma_i$-projector of $G/H_G$, in particular permutes with it, for all $\sigma_i\in\sigma(H/H_G)$.

  From  $O(\sigma_i)^{\mathrm{O}^{\sigma_i}(G)}=O(\sigma_i)$ and $\mathrm{O}^{\sigma_i}(G)H_G/H_G=\mathrm{O}^{\sigma_i}(G/H_G)$ it follows that $O(\sigma_i)/H_G$ is normalized by any $\sigma_i'$-subgroup of $G/H_G$. So it permutes with any $\sigma_j$-projector of $G/H_G$ for any $\sigma_j\in \sigma(G/H_G)$ with $\sigma_i\neq\sigma_j$. Thus $O(\sigma_i)/H_G$ is $\sigma$-$p$-permutable in $G/H_G$. Since $H/H_G$ is $\sigma$-nilpotent, it is $\sigma$-$p$-permutable in $G/H_G$ by Lemma \ref{prop1}. Thus $H$ is $\sigma$-$p$-permutable in $G$ by Lemma \ref{l1}.\end{proof}

\begin{lem}\label{coicide}
 Let $G$ be a group and $\sigma$ be a partition   $\pi(G)$. If $G$ has a Hall $\sigma_i$-subgroup for every $\sigma_i\in\sigma$, then the sets of $\sigma$-permutable and $\sigma$-$p$-permutable subgroups coincide.
\end{lem}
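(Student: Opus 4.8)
The plan is to show that, under the stated hypothesis, every Hall $\sigma_i$-subgroup of $G$ is a $\mathfrak{G}_{\sigma_i}$-projector of $G$, and then to read off both inclusions between the two classes of subgroups directly from the definitions, using \cite[Theorem 1]{murashka2018generalization} for one of them.

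The first and only substantial step is the claim: if $P$ is a Hall $\pi$-subgroup of $G$, then $P$ is a $\mathfrak{G}_\pi$-projector of $G$. To verify it I would fix $N\trianglelefteq G$ and note that $PN/N\cong P/(P\cap N)$ is a $\pi$-group whose index $[G/N:PN/N]=[G:PN]$ divides $[G:P]$ and hence is a $\pi'$-number; so $PN/N$ is a Hall $\pi$-subgroup of $G/N$ and therefore maximal among the $\pi$-subgroups of $G/N$. As $N$ was arbitrary, $P$ is a $\mathfrak{G}_\pi$-projector. Applying this with $\pi=\sigma_i$ for each $\sigma_i\in\sigma$ shows that every Hall $\sigma_i$-subgroup of $G$ is a $\mathfrak{G}_{\sigma_i}$-projector of $G$.

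With this in hand the coincidence is immediate. If $H$ is $\sigma$-permutable, choose Hall $\sigma_i$-subgroups $H_i$ of $G$ with $HH_i^x=H_i^xH$ for all $x\in G$ and $\sigma_i\in\sigma$; by the claim each $H_i$ is a $\mathfrak{G}_{\sigma_i}$-projector of $G$, so $H$ is $\sigma$-$p$-permutable. Conversely, if $H$ is $\sigma$-$p$-permutable then by \cite[Theorem 1]{murashka2018generalization} we have $HH_i^x=H_i^xH$ for every $\mathfrak{G}_{\sigma_i}$-projector $H_i$ of $G$, every $x\in G$ and every $\sigma_i\in\sigma$; taking each $H_i$ to be a Hall $\sigma_i$-subgroup of $G$ (which exists by hypothesis and is a projector by the claim) shows that $H$ is $\sigma$-permutable.

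I expect no genuine obstacle beyond the claim, whose only subtlety is conceptual: the reverse inclusion ``every $\mathfrak{G}_\pi$-projector is a Hall $\pi$-subgroup'' can fail for non-soluble $G$ even when Hall $\pi$-subgroups exist (for example $S_3$ is a $\mathfrak{G}_{\{2,3\}}$-projector of $A_5$ but is not a Hall subgroup), so the argument must rely only on the easy inclusion above — which is exactly what the lemma requires.
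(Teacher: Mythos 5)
Your proof is correct and follows essentially the same route as the paper: Hall $\sigma_i$-subgroups are $\mathfrak{G}_{\sigma_i}$-projectors (which the paper asserts without proof and you verify), giving one inclusion directly, while the converse uses \cite[Theorem 1]{murashka2018generalization} to get permutability with every projector, in particular with the Hall $\sigma_i$-subgroups. Your cautionary remark that projectors need not be Hall subgroups is apt but not needed for the argument.
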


\begin{proof}
  Since a Hall $\pi$-subgroup is a $\mathfrak{G}_{\pi}$-projector, every $\sigma$-permutable subgroup is a $\sigma$-$p$-permutable subgroup. From \cite[Theorem 1]{murashka2018generalization} it follows that every $\sigma$-$p$-permutable subgroup permutes with every $\mathfrak{G}_{\sigma_i}$-projector for every $\sigma_i\in\sigma$. Hence it is a $\sigma$-permutable subgroup.
\end{proof}
\section{Proof of Theorem \ref{thm1}}

  1. Note that a group $G$ is $\sigma$-nilpotent for $\sigma=\{\pi(G)\}$. Let prove that if a group $G$ is $\sigma^1$-nilpotent and $\sigma^2$-nilpotent, then $G$ is $\sigma^1\cap\sigma^2$-nilpotent. Since $G$ is $\sigma^1$-nilpotent and $\sigma^2$-nilpotent, it has normal Hall $\sigma_i^1$-subgroups and
 $\sigma_j^2$-subgroups for every $\sigma_i^1\in \sigma^1$ and $\sigma_j^2\in \sigma^2$. Since the intersection of normal Hall subgroups is again a normal Hall subgroup, we see that $G$ has a normal  Hall $(\sigma_i^1\cap\sigma^2_j)$-subgroup for every
 $\sigma_i^1\in \sigma^1$ and $\sigma_j^2\in \sigma^2$. Thus $G$ is $\sigma^1\cap\sigma^2$-nilpotent.

 2. Note that a group $G$ is $\sigma$-soluble for $\sigma=\{\pi(G)\}$. Let prove that if a group $G$ is $\sigma^1$-soluble and $\sigma^2$-soluble, then $G$ is $\sigma^1\cap\sigma^2$-soluble. Since  $G$ is $\sigma^1$-soluble and $\sigma^2$-soluble, then every its chief factor is a $\sigma_i^1$-group and
a $\sigma_j^2$-group for some $\sigma_i^1\in \sigma^1$ and $\sigma_j^2\in \sigma^2$. Hence every chief factor of $G$ is a $(\sigma_i^1\cap\sigma^2_j)$-group for some
 $\sigma_i^1\in \sigma^1$ and $\sigma_j^2\in \sigma^2$. Thus $G$ is $\sigma^1\cap\sigma^2$-soluble.

3. From the definition of $\sigma$-subnormality it follows that every subgroup of a group $G$ is $\sigma$-subnormal for $\sigma=\{\pi(G)\}$. Let prove that if $H$ is $\sigma^1$-subnormal and $\sigma^2$-subnormal in $G$, then $H$ is $\sigma^1\cap\sigma^2$-subnormal in $G$. Assume that there exist groups that have  a $\sigma^1$-subnormal and $\sigma^2$-subnormal but not $\sigma^1\cap\sigma^2$-subnormal subgroup. Let   $G$ be the least order group among them. Hence it has   a $\sigma^1$-subnormal and $\sigma^2$-subnormal but not $\sigma^1\cap\sigma^2$-subnormal subgroup $H$.

 Assume that there is a subgroup $K$ with $H\leq K<G$ and $K\trianglelefteq G$. Then $H$ is $\sigma^1$-subnormal and $\sigma^2$-subnormal in $K$ by Lemma \ref{lem1}(1). So by our assumption $H$ is $\sigma^1\cap\sigma^2$-subnormal in $K$. Thus $H$ is $\sigma^1\cap\sigma^2$-subnormal in $G$ by definition, a contradiction.

 Therefore for every $K$ with $H\leq K<G$ we have $K\not\trianglelefteq G$. It means that a maximal chains for $\sigma^1$-subnormality and $\sigma^2$-subnormality of $H$ must contain a maximal subgroups $M_1$ and $M_2$ of $G$.
 Let $M$ be a maximal subgroup of $G$ with $H\leq M$. Suppose  that $M_G\neq 1$. Then $HM_G/M_G$ is  $\sigma^1$-subnormal and $\sigma^2$-subnormal in $G/M_G$ by Lemma  \ref{lem1}(2). By our assumption $HM_G/M_G$ is  $\sigma^1\cap \sigma^2$-subnormal in $G/M_G$. So $HM_G$ is  $\sigma^1\cap \sigma^2$-subnormal in $G$ by Lemma \ref{lem1}(3). Note that $H$ is $\sigma^1$-subnormal and $\sigma^2$-subnormal in $HM_G<G$ by Lemma \ref{lem1}(1). Hence $H$  is $\sigma^1\cap \sigma^2$-subnormal in $HM_G$. Thus $H$ is $\sigma^1\cap\sigma^2$-subnormal in $G$ by definition, a contradiction. It means that $M_G=1$ for every maximal subgroup of $G$ with $H\leq M$. Now $G\simeq G/(M_1)_G$ is a $\sigma^1_i$-group and $G\simeq G/(M_2)_G$ is a $\sigma^2_j$-group for some $\sigma_i^1\in \sigma^1$ and $\sigma_j^2\in \sigma^2$. So $G$ is a $\sigma^1_i\cap\sigma^2_j$-group. Therefore every subgroup of $G$ is  $\sigma^1\cap\sigma^2$-subnormal by the definition of $\sigma$-subnormality, the final contradiction. It means that if $H$ is $\sigma^1$-subnormal and $\sigma^2$-subnormal in $G$, then $H$ is $\sigma^1\cap\sigma^2$-subnormal in $G$.

4. From the definition of $\sigma$-$p$-permutability  it follows that every subgroup of a group $G$ is $\sigma$-$p$-permutable  for $\sigma=\{\pi(G)\}$. Assume that $H$ is $\sigma^1$-$p$-permutable and $\sigma^2$-$p$-permutable in $G$. Then $H^G/H_G$ is a $\sigma^1$-nilpotent and $\sigma^2$-nilpotent subgroup by Lemma \ref{t1}. Therefore $H^G/H_G$ is $\sigma^1\cap\sigma^2$-nilpotent by\,1. Hence $H/H_G$ is $\sigma^1\cap\sigma^2$-nilpotent. Note that $H/H_G$ is $\sigma^1$-$p$-permutable and $\sigma^2$-$p$-permutable in $G/H_G$ by Lemma \ref{l1}(1).

Let $H_{ij}/H_G=\mathrm{O}_{\sigma_i^1\cap\sigma^2_j}(H/H_G)$. Then $H_{ij}/H_G$ is a normal Hall subgroup of $H/H_G$.
Note that $\mathrm{O}^{\sigma_i^1}(G/H_G)\leq N_{G/H_G}(\mathrm{O}_{\sigma_i^1}(H/H_G))$ by Lemma \ref{l2}.  From $H_{ij}/H_G\textrm{ char } \mathrm{O}_{\sigma_i^1}(H/H_G)$ it follows that $\mathrm{O}^{\sigma_i^1}(G/H_G)\leq N_{G/H_G}(H_{ij}/H_G)$. By analogy $\mathrm{O}^{\sigma_j^1}(G/H_G)\leq N_{G/H_G}(H_{ij}/H_G)$. So $$\mathrm{O}^{\sigma_i^1\cap \sigma_j^2}(G/H_G)=\mathrm{O}^{\sigma_i^1}(G/H_G)\mathrm{O}^{\sigma_j^2}(G/H_G)\leq N_{G/H_G}(H_{ij}/H_G).$$ 
From $\mathrm{O}^{\sigma_i^1\cap \sigma_j^2}(G/H_G)=\mathrm{O}^{\sigma_i^1\cap \sigma_j^2}(G)H_G/H_G$ it follows that $H_{ij}^{\mathrm{O}^{\sigma_i^1\cap \sigma_j^2}(G)}=H_{ij}$. 
Thus $H$ is  $\sigma^1\cap\sigma^2$-$p$-permutable by Theorem \ref{thm5}. 

\section{Proof of Theorem \ref{thm2}}

1. According to Lemma \ref{decomp} given a generating set $S$ (of polynomial in $n$ size) of $G$ one can find in polynomial time the generating set $S'=\cup_{\sigma_i\in\sigma}S_{\sigma_i}$  (of polynomial in $n$ size) such that every its element is a $\sigma_i$-element for some $\sigma_i\in\sigma$.
Now according to Theorem \ref{thm4} we need only to check that $\langle S_{\sigma_i}\rangle K/K\simeq \langle S_{\sigma_i}\rangle/(\langle S_{\sigma_i}\rangle\cap K)$ is a $\sigma_i$-group and $ [S_{\sigma_i}, S_{\sigma_j}]\subseteq  K$ for every $\sigma_i, \sigma_j\in\sigma$ with $\sigma_i\neq\sigma_j$. All these can be done in  polynomial time by Theorem \ref{Basic}(7, 8) and Corollary \ref{CorBasic}.

\begin{algorithm}[H]
\caption{IsSigmaNilpotent($G, K, \sigma$)}
\SetAlgoLined
\KwResult{True, if $G/K$ is $\sigma$-nilpotent and false otherwise.}
\KwData{$K\trianglelefteq G=\langle S\rangle$, $\sigma=\{\sigma_1,\dots\sigma_k\}$ is a partition of $\pi(G)$}

Compute $S_{\sigma_1},\dots, S_{\sigma_k}$\;

\For{$i \in \{1,\dots, k\}$}
    {\If{$\pi(\langle S_{\sigma_i}\rangle/(\langle S_{\sigma_i}\rangle\cap K))\not\subseteq\sigma_i$}
          {\Return{False}\;}
       \For{$j \in \{i+1,\dots, k\}$ \textbf{and} $j\leq k$}
           {\If{$[S_{\sigma_i}, S_{\sigma_j}]\not\subseteq  K$}{\Return{False}\;}
       }
     }

 \Return{True};\
 \end{algorithm}

2. By 3 of Lemma \ref{lem1} it is enough to check that $H$ is $\sigma$-subnormal in $G$.
 According to the definition of $\sigma$-subnormality, if $H\neq G$ is $\sigma$-subnormal in $G$, then there exists a proper subgroup $M$ of $G$ with $H\leq M$ such that either $M\trianglelefteq G$ or $\mathrm{O}^{\sigma_i}(G)\leq M$ for some $\sigma_i\in\sigma$.
  Since $H^G$ and  $H\mathrm{O}^{\sigma_i}(G)$ for every $\sigma_i\in\sigma$ can be computed in  polynomial time by Theorem \ref{Basic}, the existence of such subgroup $M$ can be checked in  polynomial time. And in case of affirmative answer such $M$ will be present.
  Note that from 1 of Lemma \ref{lem1} if $H$ is $\sigma$-subnormal in $G$, the   $H$ is $\sigma$-subnormal in $M$. Now we can do a recursion. Note that every chain of subgroups of $G$ has a length at most $2n-3$ (for $n\geq 2$) by Lemma \ref{chain}.

\begin{algorithm}[H]
\caption{IsSigmaSubnormal($G, H, K, \sigma$)}
\SetAlgoLined
\KwResult{True, if $H/K$ is $\sigma$-subnormal in $G/K$ and false otherwise.}
\KwData{$H$ is a  subgroup of a group $G$, $K\trianglelefteq G$, $\sigma$ is a partition of $\pi(G)$}

\If{$H=G$}{\Return{True}\;}

\If{$H^G\neq G$}{\Return{IsSigmaSubnormal($H^G, H, K, \sigma$)\;}}

\For{$\sigma_i\in\sigma(|G:H|)$}{
   \If{$H\mathrm{O}^{\sigma_i}(G)\neq G$}{\Return{IsSigmaSubnormal($H\mathrm{O}^{\sigma_i}(G), K, H, \sigma$)\;}}
}

 \Return{False\;}
 \end{algorithm}

3. By    Lemma \ref{l1} it is enough to check that $H$ is $\sigma$-$p$-permutable  in $G$. The check for $\sigma$-$p$-permutability is described in Theorem \ref{thm5}.
From Theorem \ref{Basic} and 1 we can check $H^G/H_G$ for $\sigma$-nilpotency in polynomial time.
Given a generating set $S$ (of polynomial in $n$ size) of $H$ one can find in polynomial time the generating set $S'=\cup_{\sigma_i\in\sigma}S_{\sigma_i}$  (of polynomial in $n$ size) such that every its element is a $\sigma_i$-element for some $\sigma_i\in\sigma$. Since $H^G/H_G$ is $\sigma$-nilpotent and hence $H/H_G$ is $\sigma$-nilpotent, $\langle S_{\sigma_i}, H_G\rangle$ is the full inverse image of a Hall $\sigma_i$-subgroup $H_{\sigma_i}/H_G$ of $H/H_G$. Note that $H_{\sigma_i}/H_G=\mathrm{O}_{\sigma_i}(H/H_G)$. From Remark \ref{rem1} it follows that $\{ \mathrm{O}^{\sigma_i}(G)\mid \sigma_i\in\sigma(H/H_G)\}$ can be computed in polynomial time.
 Now by Theorem \ref{Basic} and Corollary \ref{CorBasic} we can check in polynomial time if $\langle S_{\sigma_i}, H_G\rangle^{\mathrm{O}^{\sigma_i}(G)}=\langle S_{\sigma_i}, H_G\rangle$.

\begin{algorithm}[H]
\caption{IsSigmaPermutable($G, H, K, \sigma$)}
\SetAlgoLined
\KwResult{True, if $H/K$ is $\sigma$-$p$-permutable in $G/K$ and false otherwise.}
\KwData{$H=\langle S\rangle$ is a  subgroup of a group $G$, $K\trianglelefteq G$, $\sigma$ is a partition of $\pi(G)$}

Compute $H_G$\;

\If{not IsSigmaNilpotent$(H^G, H_G, \sigma)$}{\Return{False}\;}

Compute $S_{\sigma_1},\dots, S_{\sigma_k}$\;

\For{$\sigma_i\in\sigma(H/H_G)$}
  {$T\gets \langle S_{\sigma_i}, H_G\rangle$ (i.e the full inverse image of $\mathrm{O}_{\sigma_i}(H/H_G)$)\;
   \If{$T^{\mathrm{O}^{\sigma_i}(G)}\neq T$}{\Return{False}\;}
}

 \Return{True\;}
 \end{algorithm}

4.  A composition series $G_0=K\triangleleft G_1\triangleleft\dots\triangleleft G_k=G$ of $G$ passing through $K$ can be found in  polynomial time by 2 of Theorem \ref{Basic}. Now it is enough to check that $\pi(|G_i|/|G_{i-1}|)\subseteq \sigma_j$ for some $\sigma_j\in\sigma$ for all $i\in\{1,\dots,k\}$.

Note that if  $G/K$ is $\sigma$-soluble, then  it has a Hall $\sigma_i$-subgroup for every $\sigma_i\in\sigma$ by \cite[Theorem B]{Skiba2015}. Now sets of   $\sigma$-permutable subgroups and $\sigma$-$p$-permutable subgroups of $G/K$ coincide by Lemma \ref{coicide}. Therefore we can use 3 as the check for $\sigma$-permutability.

\section{Proof of Theorem \ref{thm3}}

1. The least $\sigma$ for which a group $G$ is $\sigma$-nilpotent can be found in  polynomial time.

Let $\sigma$ be the least partition of $\pi(G/K)$ for which $G/K$ is $\sigma$-nilpotent.
Let $S$ be a generating set of a group $G$ and $\pi(G/K)=\{p_1,\dots, p_n\}$. We start with the least possible partition $\sigma^0=\{\{p_1\},\dots,\{p_k\}\}$ of $\pi(G/K)$. From $G\leq S_n$ it follows that $k\leq n$. It is clear that $\sigma^0\leq\sigma$. Let $\pi=\pi(G)\setminus\pi(G/K)$. According to Lemma \ref{decomp} given a generating set $S$ (of polynomial in $n$ size) of $G$ one can find in polynomial time the generating set $S'=\cup_{\{p_i\}\in\sigma}S_{\{p_i\}}\cup S_{\pi}$  (of polynomial in $n$ size) such that every its element is a $p_i$-element for some $\{p_i\}\in\sigma$ or $\pi(G/K)'$-element.
  Let $S_{\sigma^0_i}=\{s_{p_i}\mid s\in S\}$.

Assume that we know some partition $\sigma^1\leq \sigma$ and corresponding to it sets of generators $S_{\sigma^1_i}$. Our idea is to construct partition $\sigma^2$ such that $\sigma^1\leq\sigma^2\leq\sigma$ and if $\sigma^1=\sigma^2$, then $\sigma^2=\sigma$. Note that every chain of partitions from $\sigma^0$ to $\sigma$ has at most $n$ elements.

Let $\Gamma=(V, E)$ be a graph where  $V=\sigma^1$ and two vertices are connected by the edge iff either $[S_{\sigma^1_i}, S_{\sigma^1_j}]\not\subseteq K$ or
$\pi(|\langle S_{\sigma^1_i}\rangle|/|\langle S_{\sigma^1_i}\rangle\cap K|)\cap \sigma^1_j\neq\emptyset$ or $\pi(|\langle S_{\sigma^1_j}\rangle|/|\langle S_{\sigma^1_j}\rangle\cap K|)\cap \sigma^1_i\neq\emptyset$. From the definition of $\sigma$-nilpotency and $\sigma^1\leq \sigma$ it follows that two elements of $\sigma^1$ lie in the same element of $\sigma$ if they are connected by an edge in $\Gamma$. Therefore
two elements of $\sigma^1$ lie in the same element of $\sigma$ if they are in the same connected component of $\Gamma$.
By finding connected components of $\Gamma$ and joining all vertices in the same connected component we will find  a partition $\sigma^2$ with  $\sigma^1\leq\sigma^2\leq\sigma$.
Note that there are no more than $n(n-1)/2$  pairs of vertices in $\Gamma$ and for a given pair of vertices we can check if it is joined by an edge in  polynomial time by Theorem \ref{Basic}. Also finding connected components of $\Gamma$ can be done by the breadth first search in  polynomial time.

If there are no edges in $\Gamma$, then $\sigma^1=\sigma^2$, $\pi(|\langle S_{\sigma^1_i}\rangle|/|\langle S_{\sigma^1_i}\rangle\cap K|)\subseteq \sigma^1_i$ for all $\sigma^1_i\in\sigma^1$ and $[S_{\sigma^1_i}, S_{\sigma^1_j}]\subseteq K$ for all $\sigma^1_i, \sigma^1_j\in\sigma^1$ with $\sigma_i^1\neq\sigma_j^1$. It means that $G/K$ is $\sigma^1$-nilpotent by Theorem \ref{thm4}. From $\sigma^1\leq \sigma$ it follows that $\sigma^1=\sigma$.
If    $\Gamma$ has edges, then we can set $\sigma^1=\sigma^2$ and repeat the previous step. In no more than $n$ steps we will stop.

\begin{algorithm}[H]
\caption{LeastSigmaNilpotent($G, K$)}
\SetAlgoLined
\KwResult{The least partition $\sigma$ of $\pi(G/K)$ for which $G/K$ is $\sigma$-nilpotent.}
\KwData{$K$ is a  subgroup of a group $G$}

\For{$p\in\pi(|G|/|K|)$}
  {Add $\{p\}$ to $\sigma$\;}

$\pi\gets\pi(G)\setminus\pi(G/K)$

Using partition $\{\sigma_1,\dots,\sigma_k, \pi\}$ of $\pi(G)$ compute $S_{\sigma_1},\dots, S_{\sigma_k}$\;

 $\sigma^1\gets\{\pi(G/K)\}$\;

\While{$\sigma\neq\sigma^1$}
{$\sigma^1\gets \sigma$\;

Define a graph $\Gamma=(V, E)$ with empty $V$ and $E$\;

\For{$\sigma_i\in\sigma$}
     {Add $\sigma_i$ to $V$ with label $l_i=\pi(|\langle S_{\sigma_i}\rangle|/|\langle S_{\sigma_i}\rangle\cap K|)$\;}

\For{$i\in\{1,\dots, k-1\}$}
  {\For{$j\in\{i+1,\dots k\}$}
       {\If{$[S_{\sigma_i}, S_{\sigma_j}]\not\subseteq K$ \textbf{or} $l_i\cap \sigma_j\neq\emptyset$ \textbf{or} $l_j\cap \sigma_i\neq\emptyset$}
             {Add $\{\sigma_i, \sigma_j\}$ to $E$\;}
       }
  }

   Find connected components $\Gamma_k$, $1\leq k\leq l$, of graph $\Gamma$\;

\If{$|\sigma|=l$}{\Return{$\sigma$}\;}

   $|\sigma^2|\gets l$\;
   \For{$k\in\{1,\dots, l\}$}
      {$\sigma^2_k\gets\{\}$\;
         $T_k\gets []$\;
      \For{$\sigma_i\in \Gamma_k$}
          {$\sigma^2_k\gets\sigma^2_k\cup l_i$\;
           $T_k\gets  T_k \cup S_{\sigma_i}$\;
   }
   $S_{\sigma_k}\gets T_k$\;
   }
   $\sigma\gets\sigma^2$\;
 }
 \Return{$\sigma$\;}
 \end{algorithm}

2. Let $\sigma$ be the least partition of $\pi(G/K)$ for which $G/K$ is $\sigma$-soluble.
Find a composition series $G_0=K\triangleleft G_1\triangleleft\dots\triangleleft G_k=G$ of $G$ passing through $K$ (it can be done in  polynomial time by 2 of Theorem \ref{Basic}). It is clear that if $\pi(G_i/G_{i-1})\cap\pi(G_j/G_{j-1})\neq\emptyset$, then there is $\sigma_k\in\sigma$ with $\pi(G_i/G_{i-1})\cup\pi(G_j/G_{j-1})\subseteq\sigma_k$.

Let  $\sigma^0_i=\pi(|G_i|/|G_{i-1}|)$ and $\Gamma=(V, E)$ be a graph where $V=\{\sigma^0_1,\dots, \sigma^0_k\}$ and $\sigma^0_i$ is joined by an edge with $\sigma^0_j$ iff $\sigma^0_i\cap \sigma^0_j\neq\emptyset$. Note that  $|V|\leq 2n$ by Lemma \ref{chain}. Hence $\Gamma$ can be computed in  polynomial time. The connected components  $\Gamma_i$, $1\leq i\leq k$ of  $\Gamma$ can be computed in   polynomial time by the breadth first search algorithm. Let $\sigma^1_i=\cup_{\sigma^0_j\in\Gamma_i}\sigma^0_j$. It is clear that $\sigma^1=\{\sigma^1_1,\dots, \sigma^1_k\}=\sigma$ is the required partition.

3. Assume that $\sigma$ is the least partition of $\pi(G)$ for which $H$ is $\sigma$-$p$-permutable in $G$, $K\trianglelefteq G$, $K\leq H$ and $\sigma^0$ is the least partition of $\pi(G/K)$ for which $H$ is $\sigma$-$p$-permutable in $G/K$. Let prove that  $\sigma$ can be obtained from $\sigma^0$ by adding $\{p\}$ to it for all $p\in\pi(G)\setminus\pi(G/K)$.

Let $\sigma^1=\{\sigma_i\mid \sigma_i\in\sigma^0$ or $\sigma_i=\{p\}$ for $p\in\pi(G)\setminus\pi(G/K)\}$. Since $H/K$ is $\sigma^0$-$p$-permutable in $G/K$,  $H$ permutes with every $\sigma_i$-projector of $G$ for all $\sigma_i\in\sigma^0$ by Lemma~\ref{l1}. Note that $K\leq H$ contains every Sylow $p$-subgroup ($\{p\}$-projector) of $G$ for all $p\in\pi(G)\setminus\pi(G/K)$. Hence $H$ permutes with them. Thus $H$ is $\sigma^1$-$p$-permutable in $G$. Now $\sigma\leq\sigma^1$. Assume that $\sigma\neq\sigma^1$, i.e. some elements of $\sigma^1$ are disjoint unions of elements of $\sigma$. From the construction of $\sigma^1$ it follows that  some elements of $\sigma^0$ are disjoint unions of elements of $\sigma$. From Lemma \ref{l1} it follows that $H/K$ permutes with every $\sigma_i$-projector of $G/K$ for all $\sigma_i\in\sigma$. This contradicts the fact that $\sigma^0$ is the least partition of $\pi(G/K)$ for which $H$ is $\sigma^0$-$p$-permutable. Thus $\sigma^1=\sigma$.

Note that $(H/K)_{G/K}=H_G/K$ and $G/H_G\simeq (G/K)/((H/K)_{G/K})$. Hence to compute   the least partition $\sigma$ of $\pi(G/K)$ for which $H/K$ is $\sigma$-$p$-permutable in $G/K$ we can compute   the least partition $\sigma^0$ of $\pi(G/H_G)$ for which $H/H_G$ is $\sigma^0$-$p$-permutable in $G/H_G$ and then add $\{p\}$ to it for all $p\in\pi(G/K)\setminus\pi(G/H_G)$.

Let $H$ be a subgroup of $G$,  $\sigma$ be the least partition of $\pi(G/H_G)$ for which $H/H_G$ is $\sigma$-$p$-permutable in $G/H_G$.

According to Lemma \ref{t1}  $H^G/H_G$ is $\sigma$-nilpotent. In particular, if $\sigma^{-1}$ is
the least partition of $\pi(H^G/H_G)$ for which $H^G/H_G$ is $\sigma^{-1}$-nilpotent, then by adding $\{p\}$ for all $p\in\pi(G/H_G)\setminus\pi(H^G/H_G)$ to $\sigma^{-1}$ we obtain the least partition $\sigma^0$ of $\pi(G/H_G)$ for which $H^G/H_G$ is $\sigma^0$-nilpotent. Note that $\sigma^{0}\leq\sigma$ and $\sigma^0$ can be computed in  polynomial time by 1.

Now let $\sigma^{1}$ be some partition with $\sigma^{0}\leq \sigma^{1}\leq\sigma$ (we can chose $\sigma^{1}=\sigma^{0}$).
Therefore every element of $\sigma$ is the join of some elements of $\sigma^{1}$.

Since $H/H_G$ is $\sigma$-nilpotent and $\sigma^{1}$-nilpotent for $\sigma^{1}\leq\sigma$, we see that every Hall $\sigma_i$-subgroup $H_i/H_G$ is the direct product of some Hall $\sigma_{i,j}^{1}$-subgroups of $H_{i, j}/H_G$ of $H/H_G$ where $\sigma_i=\cup_j\sigma_{i,j}^{1}$. Note that if some subgroup $T/H_G$ normalizes $H_i/H_G$, then it normalizes every subgroup $H_{i,j}/H_G$.

According to Lemma \ref{l2}  $\mathrm{O}^{\sigma_i}(G/H_G)\leq N_{G/H_G}(H_i/H_G)$.
Note that \begin{center}
  $\mathrm{O}^{\pi_1\cap\pi_2}(G)=\mathrm{O}^{\pi_1}(G)\mathrm{O}^{\pi_2}(G)$ for any $\pi_1, \pi_2\subseteq\mathbb{P}$.
\end{center} Now $\mathrm{O}^{\sigma_i}(G/H_G)=\prod_{k\neq i}\mathrm{O}^{\sigma_k'}(G/H_G)=\prod_{k\neq i}\prod_j\mathrm{O}^{{\sigma^{1}_{k,j}}'}(G/H_G)$. Thus if $a, b\in\sigma^{1}$ and $\mathrm{O}^{b'}(G/H_G)\not\leq N_G(H_a/H_G)$, then $a, b$ belongs to the same element of $\sigma$.

Now consider a graph $\Gamma$ whose vertices are elements of $\sigma^{1}$ and vertices $a, b$ are connected by an edge if
$\mathrm{O}^{b'}(G/H_G)\not\leq N_G(H_a/H_G)$ or $\mathrm{O}^{a'}(G/H_G)\not\leq N_G(H_b/H_G)$. All vertices of the same connected component of $\Gamma$ must belong to some element of $\sigma$. Hence, by joining the sets which correspond to the vertices of the same connected component of $\Gamma$ we obtain new partition $\sigma^{2}$ of $\pi(G)$ with $\sigma^{0}\leq\sigma^{2}\leq \sigma$. Note that $|V(\Gamma)|\leq n$. From $\mathrm{O}^{b'}(G/H_G)=\mathrm{O}^{b'}(G)H_G/H_G$ it follows that $\mathrm{O}^{b'}(G/H_G)\not\leq N_G(H_a/H_G)$ iff $H_a^{\mathrm{O}^{b'}(G)}\neq H_a$. The last condition can be checked in  polynomial time by Theorem \ref{Basic} and Remark \ref{rem1}. Hence the connected components of $\Gamma$ can be computed in  polynomial time. If $\sigma^{2}\neq\sigma^{1}$. then we can set $\sigma^{1}\leftarrow\sigma^{2}$ and repeat the previous step. Note that after no more that $|\sigma^{0}|\leq n$ steps we obtain  $\sigma^{2}=\sigma^{1}$.

Let prove that $\sigma^{1}=\sigma$. We need only to prove that $H$ is  $\sigma^{1}$-$p$-permutable subgroup of $G$. Let $H_i$ be a Hall $\sigma_i^{1}$-subgroup of $H/H_G$. Then $\mathrm{O}^{{\sigma_i^1}}(G/H_G)\leq N_{G/H_G}(H_i/H_G)$ by the construction of $\sigma^{1}$.
It means that $H_i/H_G$ permutes with every $\sigma_i'$-subgroup of $G/H_G$.
Since $H^G/H_G$ is a normal $\sigma^{1}$-nilpotent subgroup of $G/H_G$, $H_i/H_G\leq \mathrm{O}_{\sigma^1_i}(H^G/H_G)\leq\mathrm{O}_{\sigma^1_i}(G/H_G)$. Hence $H_i/H_G$ belongs to every $\mathfrak{G}_{\sigma_i}$-projector of $G/H_G$ and thus permutes with it.
It means that  $H_i/H_G$ is a $\sigma^{1}$-$p$-permutable subgroup of $G/H_G$.

\begin{algorithm}[H]
\caption{LeastSigmaPermutable($G, H$)}
\SetAlgoLined
\KwResult{The least partition $\sigma$ of $\pi(G/K)$ for which $H/K$ is $\sigma$-$p$-permutable in $G/K$.}
\KwData{$H$ is a  subgroup of a group $G$, $K\trianglelefteq G$ with $K\leq H$}

$\sigma\gets $LeastSigmaNilpotent$(H^G, H_G)$\;
\If{$\sigma=\{\pi(G/K)\}$}{\Return{$\sigma$}\;}

\For{$p\in\pi(G/H_G)\setminus\pi(\sigma)$}
  {Add $\{p\}$ to $\sigma$\;}

\For{$\sigma_i\in\sigma$}
   {$P_i\gets\mathrm{O}^{\sigma_i'}(G)$\;}

$\sigma^1\gets\{\pi(G)\}$\;

\While{$\sigma\neq\sigma^1$}
{$\sigma^1\gets \sigma$\;

   \For{$\sigma_i \in \sigma^1$}
      {$H_i\gets $ the full inverse image of $\mathrm{O}_{\sigma_i}(H/H_G)$ in $H/H_G$\;}

    \For{$\sigma_i \in \sigma$}
       {\For{$\sigma_j \in \sigma, i\neq j$}
          {\If{$H_i^{P_j}\neq H_i$}
               {$\sigma_i\gets\sigma_i\cup\sigma_j$\;}
          }
        }
   Find connected components $\Gamma_k$, $1\leq k\leq l$, of graph $\Gamma$ with $V(\Gamma)=\sigma$ and $\{\sigma_i, \sigma_j\}\in E(\Gamma)$   iff $i\neq j$ and $\sigma_i\cap\sigma_j\neq\emptyset$\;
\If{$|\sigma|=l$}{\Return{$\sigma$}\;}

   $|\sigma^2|\gets l$\;
   \For{$k\in\{1,\dots, l\}$}
      {$\sigma^2_k\gets\{\}$\;
         $T_k\gets []$\;
         \For{$\sigma_i\in \Gamma_k$}
               {$\sigma^2_k\gets\sigma^2_k\cup \sigma_i$\;
                 $T_k\gets\langle T_k, P_i\rangle$\;
                }
  $P_k\gets T_k$\;}

   $\sigma\gets\sigma^2$\;

}
\For{$p\in\pi(G/K)\setminus\pi(G/H_G)$}{Add $\{p\}$ to $\sigma$\;}
 \Return{$\sigma$\;}
 \end{algorithm}

{\small\bibliographystyle{siam}
\bibliography{SigmaAlg}}

\end{document}